\title[Subgroup growth of certain RACGs]{Subgroup growth of virtually cyclic right-angled Coxeter groups and their free products}
\author{Hyungryul Baik, Bram Petri and Jean Raimbault}
  \address{Department of Mathematical Sciences, KAIST, 291 Daehak-ro Yuseong-gu, Daejeon, 34141, South Korea }
  \email{hrbaik@kaist.ac.kr}
  \address{Mathematical Institute, University of Bonn, Endenicher Allee 60, Bonn, Germany}
  \email{bpetri@math.uni-bonn.de}
  \address{Institut de Math\'ematiques de Toulouse ; UMR5219 \\ Universit\'e de Toulouse ; CNRS \\ UPS IMT, F-31062 Toulouse Cedex 9, France}
  \email{Jean.Raimbault@math.univ-toulouse.fr}
\date{\today}
\thanks{
H.~B. was partially supported by Samsung Science \& Technology Foundation grant No. SSTF-BA1702-01.
B.~P. gratefully acknowledges support from the ERC Advanced Grant ``Moduli''. 
J.~R. was supported by the grant ANR-16-CE40-0022-01 - AGIRA. }
\newtheorem{thm}{Theorem}[section]
\newtheorem{prp}[thm]{Proposition}
\newtheorem{cor}[thm]{Corollary}
\newtheorem{lem}[thm]{Lemma}
\newenvironment{thmrep}[1]
  {\innerthmrep}
  {\endinnerthmrep}
\newenvironment{correp}[1]
  {\innercorrep}
  {\endinnercorrep}
\newenvironment{lemrep}[1]
  {\innerlemrep}
  {\endinnerlemrep}
\newenvironment{prprep}[1]
  {\innerprprep}
  {\endinnerprprep} 
\theoremstyle{definition}
\newtheorem{dff}[thm]{Definition}
\newcommand{\nc}{\newcommand}
\nc{\dmo}{\DeclareMathOperator}
\nc{\abs}[1]{\left| #1 \right|}
\nc{\bigO}[1]{O\left(#1\right)}
\nc{\card}[1]{\left|#1\right|}
\nc{\ceil}[1]{\left\lceil #1 \right\rceil}
\nc{\CC}{\mathbb{C}}
\nc{\floor}[1]{\left\lfloor #1 \right\rfloor}
\nc{\ZZ}{\mathbb{Z}}
\nc{\len}[1]{\left| #1 \right|}
\nc{\littleo}[1]{o\left(#1\right)}
\dmo{\Mat}{Mat}
\nc{\NN}{\mathbb{N}}
\nc{\norm}[1]{\left|\left| #1 \right|\right|}
\nc{\QQ}{\mathbb{Q}}
\nc{\RR}{\mathbb{R}}
\nc{\st}[2]{\left\{ #1 ;\; #2\right\}}
\dmo{\supp}{supp}
\nc{\tr}[1]{\mathrm{tr}\left(#1\right)}
\dmo{\area}{area}
\dmo{\conv}{conv}
\dmo{\diam}{diam}
\dmo{\DD}{\mathbb{D}}
\dmo{\dist}{\mathrm{d}}
\nc{\HH}{\mathbb{H}}
\dmo{\MCG}{MCG}
\dmo{\MPL}{MPL}
\dmo{\Mod}{\mathcal{M}}
\dmo{\PL}{PL}
\nc{\Sphere}{\mathbb{S}}
\dmo{\sys}{sys}
\dmo{\Teich}{\mathcal{T}}
\nc{\Torus}{\mathbb{T}}
\dmo{\vol}{vol}
\dmo{\WP}{WP}
\dmo{\convTV}{\;\stackrel{\mathrm{TV}}{\longrightarrow}\;}
\nc{\ExV}[2]{\mathbb{E}_{#1}\left[#2\right]}
\dmo{\EE}{\mathbb{E}}
\nc{\Pro}[2]{\mathbb{P}_{#1}\left[#2\right]}
\dmo{\PP}{\mathbb{P}}
\nc{\distTV}[2]{\mathrm{d}_{\rm TV}\left(#1,#2\right)}
\dmo{\UU}{\mathbb{U}}
\nc{\Var}[2]{\mathbb{V}\mathrm{ar}_{#1}\left[#2\right]}
\dmo{\alt}{\mathfrak{A}}
\dmo{\Aut}{Aut}
\dmo{\Fix}{Fix}
\dmo{\Hom}{Hom}
\dmo{\PSL}{PSL}
\dmo{\Rep}{Rep}
\dmo{\sym}{\mathfrak{S}}
\dmo{\inv}{\mathcal{I}}
\dmo{\orb}{\mathcal{O}}
\dmo{\stab}{Stab}
\nc{\cox}{\Gamma^\mathrm{Cox}}
\nc{\art}{\Gamma^\mathrm{Art}}
\begin{document}

\begin{abstract}
  We determine the asymptotic number of index $n$ subgroups in virtually cyclic Coxeter groups and their free products as $n\to\infty$.
\end{abstract}

\maketitle

\section{Introduction}

Given a finitely generated group $\Gamma$ and a natural number $n$, the number $s_n(\Gamma)$ of index $n$ subgroups of $\Gamma$ is finite. This leads to the question how, given a group, the number $s_n(\Gamma)$ behaves as a function of $n$ and which geometric information about the group is encoded in it. It is quite rare that an explicit expression for $s_n(\Gamma)$ can be written down and even if so, the expression one obtains might still be so complicated that it's hard to extract any information out of it. As such, one usually considers the asymptotic behavior of $s_n(\Gamma)$ for large $n$. 

In our previous paper \cite{BPR}, we considered the factorial growth rate of $s_n(\Gamma)$ for right-angled Artin and Coxeter groups. That is, we studied limits of the form 
\[ \lim_{n\to\infty} \frac{\log (s_n(\Gamma))}{n\log (n)}.\]
In the case of right-angled Artin groups we were able to determine this limit explicitly and in the case of right-angled Coxeter groups, we determined it for a large class of groups. Moreover, we conjectured an explicit formula for this limit. Our methods were mainly based on counting arguments.

\subsection{New results}
In this paper, we further study the case of right-angled Coxeter groups. We consider a very specific sequence of such groups: virtually cyclic Coxeter groups and their free products. The upshot of this is that we can access much finer asymptotics than we can in the general case.

Recall that the right-angled Coxeter group associated to a graph $\mathcal{G}$ with vertex set $V$ and edge set $E$ is given by
\[\cox(\mathcal{G}) = \langle \sigma_v,\; v\in V|\: \sigma_v^2=e \; \forall v\in V,\; [\sigma_v,\sigma_w]=e\; \forall \{v,w\}\in E \rangle.\]

Since M\"uller's results \cite{Mul2} already cover all finite groups, we focus on the infinite case. It turns out that it's not hard to classify infinite virtually cyclic right-angled Coxeter groups. Indeed they are exactly those groups whose defining graph is a suspension over a complete graph $\mathcal{K}_r$ on $r\in\NN$ vertices (see Lemma \ref{lem_class}). Let us denote these graphs by $\mathcal{A}_r$.

Our first result is that for a virtually cyclic Coxeter group, an explicit formula for its number of subgoups can be written down. In this formula and throughout the paper (whenever no confusion arises from it), $s_{2^j}$ will denote the number index $2^j$ subgroups of $(\ZZ/2\ZZ)^r$ and is given by
\[s_{2^j} = s_{2^j}((\ZZ/2\ZZ)^r) = \frac{\prod_{l=0}^{j-1} (2^r-2^l)}{\prod_{l=0}^{j-1} (2^j-2^l)}. \]
for all $j=0,\ldots, r$.
\begin{correp}{\ref{cor_subgroupcount}} 
Let $r \in \NN$ and let
\[\Gamma = \cox(\mathcal{A}_r).\]
Then
\[s_n(\Gamma) = n\; \left(1+ \sum_{\substack{ 0< j \leq r \\ s.t. \;  2^j | n}} 2^j\; s_{2^j} \right) + \sum_{\substack{ 0\leq  j \leq r \\ s.t. \;  2^{j+1} | n}} 2^j\; s_{2^j},\]
for all $n\in \NN$.
\end{correp}

One thing to note in the formula above is that the number of subgroups is not a monotone function in $n$: if $n$ is divisible by a large power of $2$ then there is a jump. In particular, no smooth asymptote is to be expected for this sequence. This is very different from the situation for non-trivial free products of virtually cyclic right-angled Coxeter groups\footnote{Recall that for functions $f,g:\NN\to\RR$ the notation $f(n)\sim g(n)$ as $n\to \infty$ indicates that $f(n)/g(n)\to 1$ as $n\to\infty$.}:

\begin{thmrep}{\ref{thm_subgrps}}
Let $m\in \NN_{\geq 2}$, $r_1,\ldots,r_m \in \NN$ and
\[\Gamma = \Asterisk_{l=1}^m \cox(\mathcal{A}_{r_l}) .\]
Then there exist explicit constants $A_\Gamma, B_\Gamma >0$ and 
\[C_\Gamma \geq 0\]
with equality if and only if $r_l \in \{0,1\}$ for $l=1,\ldots,m$ (see Definition \ref{def_csts}) so that
\[ s_n(\Gamma) \sim A_\Gamma \;n^{1+C_\Gamma}\; \exp(B_\Gamma\;\sqrt{n}) \; n!^{m-1}.\]
as $n\to\infty$.
\end{thmrep}
For the sake of simplicity, we have not included finite factors in the free product above. But, using M\"uller's results \cite{Mul2}, the theorem above can easily be extended to also allow free products with finite factors.

Because the definitions of the constants $A_\Gamma$, $B_\Gamma$ and $C_\Gamma$  (especially the former) are rather lengthy, we will postpone them to Section \ref{sec_permreps}. We do however note that they behave nicely with respect to free products. That is
\[A_\Gamma = \prod_{l=1}^m A_{r_l},\quad B_\Gamma = \sum_{l=1}^m B_{r_l} \quad \text{and} \quad C_\Gamma = \sum_{l=1}^m C_{r_l}.\]
where $A_r = A_{\cox(\mathcal{A}_{r_l})}$, $B_r = B_{\cox(\mathcal{A}_{r_l})}$ and $C_r = C_{\cox(\mathcal{A}_{r_l})}$. The values for some low complexity cases are

\begin{table}[H]
\begin{tabular}{c|c c c |c c}
 &  \multicolumn{3}{c}{Exact values} &  \multicolumn{2}{c}{Numerical values}  \\
\hline 
\rule{0pt}{4ex}
$r$ & $A_r$ & $B_r$ & $C_r$ & $A_r$ & $B_r$ \\
\hline 
\hline
\rule{0pt}{4ex}
0 & $\frac{1}{\sqrt{8\;\pi}}\cdot \exp\left(-\frac{1}{2}\right)$ & $2$ & $-\frac{1}{2}$ & $0.1210\ldots$ & $2$ \\[3mm]
\hline 
\rule{0pt}{4ex}
1 & $ \frac{1}{\sqrt[4]{2048\;\pi^2}}\cdot \exp\left(-\frac{7}{4}\right)$ & $2\sqrt{2}$ & $-\frac{1}{4}$ &  $0.01457\ldots$ & $2.8284\ldots$ \\[3mm]
\hline 
\rule{0pt}{4ex}
2 & $\frac{1}{320\;\sqrt{\pi}}\cdot \exp\left(-\frac{63}{8}\right)$ & $2\sqrt{5}$ & $\frac{1}{2}$ & $6.7020 \cdot 10^{-7}$ & $4.4721\ldots$  \\[3mm]
\hline 
\rule{0pt} {4ex}
3 &  $\frac{1}{68719476736\sqrt{\pi}}\exp\left(-\frac{671}{16}\right)$ & $8$ & $\frac{13}{4}$ & $5.0248 \cdot 10^{-30}$ & $8$ \\[3mm]
\hline
\end{tabular}
\caption{The first four values of $A_r$, $B_r$ and $C_r$.}
\end{table}

We derive the asymptote above from the number of permutation representations of $\Gamma$. Define
\[h_n(\Gamma) = \card{\Hom(\Gamma,\sym_n)},\]
where $\sym_n$ denotes the symmetric group on $n$ letters. We have:

\begin{thmrep}{\ref{thm_permrep}}
Let $r_1,\ldots,r_m \in \NN$ and
\[\Gamma = \Asterisk_{l=1}^m \cox(\mathcal{A}_{r_l}) .\]
Then 
\[ h_n(\Gamma) \sim A_\Gamma \;n^{C_\Gamma}\; \exp(B_\Gamma\;\sqrt{n}) \; n!^m.\]
as $n\to\infty$.
\end{thmrep}
Note that this asymptote \emph{does} hold in the case where $m=1$. Moreover, in the case $m=1$ and $r=0$, it recovers the asynmptote implied by the classical result of Chowla, Herstein and Moore \cite{CHM}.

The results above are all based on the fact that the exponential generating function for the sequence $(h_n(\Gamma))_n$ converges. In fact, it follwos from \cite[Proposition 2.1]{BPR} that the exponential generating function for this sequence converges if and only if $\Gamma$ is virtually abelian. Let us write $\Gamma = \cox(\mathcal{A}_r)$ and
\[G_r(x) = \sum_{n=0}^\infty \frac{h_n(\Gamma)}{n!}\;x^n.\]

We have:
\begin{thmrep}{\ref{thm_genfn}} Let $r\in \NN$. Then
\[G_r(x) = \prod_{j=0}^r \left( \left(1- x^{2^{j+1}}\right)^{-s_{2^j}/2} \exp\left(-2^j\; s_{2^j}+\frac{2^j\; s_{2^j}}{1-x^{2^j}}\right)\right)\]
where $s_{2^j}=s_{2^j}((\ZZ/2\ZZ)^r)$.
\end{thmrep}

Besides asymptotic information on the sequence $(h_n(\cox(\mathcal{A}_r)))_n$, Theorem \ref{thm_genfn} also allows us to derive the following recurrence for this sequence:

\begin{correp}{\ref{cor_recurrence}}Let $r\in\NN$ and write
\[h_{n,r} = h_n(\cox(\mathcal{A}_r)).\]
Then
\begin{eqnarray*}
 \frac{h_{n+1,r}}{n!} & = & - \sum_{\substack{\varepsilon \in I_r \setminus \{0\} \\ \abs{\varepsilon} \leq n}} \frac{h_{n-\abs{\varepsilon}+1,r}}{(n-\abs{\varepsilon})!}\; \prod_{j=0}^r (-1)^{\varepsilon_{j,1}+\varepsilon_{j,2}} 
 \\
 & & + \sum_{j=0}^r \sum_{\substack{\varepsilon \in I_r \\ \varepsilon_{j,1}=\varepsilon_{j,2} \\ =\varepsilon_{j,3}=0}} \frac{h_{n+1-2^j-\abs{\varepsilon},r}}{(n+1-2^j-\abs{\varepsilon})!} \cdot s_{2^j}\cdot 2^{2j} \; \prod_{k\neq j} (-1)^{\varepsilon_{k,1}+\varepsilon_{k,2}} 
 \\
  & & + \sum_{j=0}^r \sum_{\substack{\varepsilon \in I_r \\ \varepsilon_{j,1}=\varepsilon_{j,2}\\ =\varepsilon_{j,3}=0}}  \frac{h_{n+1-2^{j+1}-\abs{\varepsilon},r}}{(n+1-2^{j+1}-\abs{\varepsilon})!} \cdot s_{2^j}\cdot (2^j+2^{2j})\;  \prod_{k\neq j} (-1)^{\varepsilon_{k,1}+\varepsilon_{k,2}}
   \\
  & & - \sum_{j=0}^r  \sum_{\substack{\varepsilon \in I_r \\ \varepsilon_{j,1}=\varepsilon_{j,2}\\ =\varepsilon_{j,3}=0}}  \frac{h_{n+1-3\cdot 2^j-\abs{\varepsilon},r}}{(n+1-3\cdot 2^j-\abs{\varepsilon})!} \cdot s_{2^j}\cdot 2^j \; \prod_{k\neq j} (-1)^{\varepsilon_{k,1}+\varepsilon_{k,2}}
\end{eqnarray*}
for all $n\geq 0$, with the initial conditions
\[h_{0,r} = 1\;\; \text{and} \;\;h_{m,r} = 0 \;\text{for all}\; m<0,\]
where $I_r = \left(\{0,1\}^{r+1}\right)^3$, $\abs{\varepsilon} = \sum_{j=0}^r (\varepsilon_{j,1}+\varepsilon_{j,2}+\varepsilon_{j,3})\;2^j$ for all $\varepsilon\in I_r$ and $s_{2^j} = s_{2^j}((\ZZ/2\ZZ)^r)$.
\end{correp}

\subsection{Idea of the proof}

First of all, we derive a closed formula for the number of permutation representations $h_n(\Gamma)$ of an infinite virtually cyclic right-angled Coxeter group (Proposition \ref{prp_closedform}). Even though this expression is reasonably explicit, it seems hard to use it directly to extract information on the asymptotic behavior of the sequence. 

Instead, we use it to derive the exponential generating function $G_r(x)$ for the sequence. We do this by deriving recurrences for factors that appear in the expression, which then lead to an ordinary differential equation for the generating function $G_r(x)$.

Once we have determined $G_r(x)$, we use estimates on a contour integral to estimate its coefficients. It turns out that $G_r(x)$ is what is called H-admissible, which means that classical results due to Hayman \cite{Hay} allow us to determine the asymptotic behavior of its coefficients. Concretely, in Theorem \ref{thm_asymp} we prove an asymptote for a class of functions that contains $G_r(x)$.

This then leads to Theorem \ref{thm_permrep}. To obtain the asymptote for $s_n(\Gamma)$ we use the fact that, when $\Gamma$ is a non-trivial free product, $h_n(\Gamma)$ grows so fast that most of the permutation representations of $\Gamma$ need to be transitive.

\subsection{Notes and references}

The first work on the number of permutation representations of a group goes back to the fifties of the previous century. In \cite{CHM}, Chowla, Herstein and Moore determined the asymptotic behavior of $h_n(\ZZ/2\ZZ)$ as $n\to \infty$. Their work was generalized by Moser and Wyman in \cite{MW1,MW2} to finite cyclic groups of prime order and by M\"uller in \cite{Mul2} to \emph{all} finite groups. M\"uller proved that for a given finite group $G$, we have
\[h_n(G) \sim R_G \; n^{-1/2}\; \exp\left(\sum_{d | \card{G}} \frac{s_d(G)}{d}\; n^{d/\card{G}}\right) (n!)^{1-1/\card{G}}\]
as $n\to\infty$, where $R_G>0$ is a constant only depending on $G$.

The subgroup growth of non-abelian free groups can be derived from Dixon's theorem on generating the symmetric group with random permutations \cite{Dixon_generating}. In \cite{Mul}, M\"uller determined the subgroup growth of free products of finite groups. In \cite{Mul2}, M\"uller also determined the asymptotic number of free subgroups in virtually free groups. More recently, Ciobanu and Kolpakov studied connections between such counts for $(\ZZ/2\ZZ)^{*3}$, the RACG associated to the graph on $3$ vertices with no edges, and other combinatorial objects \cite{CK}. The subgroup growth of surface groups was determined by M\"uller and Schlage-Puchta in \cite{MP1}, which was generalized to Fuchsian groups by Liebeck and Shalev in \cite{Liebeck_Shalev}. For a general introduction to the topic of subgroup growth we refer to the monograph by Lubotzky and Segal \cite{Lubotzky_Segal}.

There is also a vast body of literature available on the asymptotics of the coefficients in power series. In the case of the exponential generating functions of finite groups, M\"uller \cite{Mul2} uses a set of techniques developped by Hayman \cite{Hay} and Harris and Schoenfeld \cite{HS}. Our generating function also resembles the functions that were considered by Wright in \cite{Wri1,Wri2}. For more background on these techniques we refer the reader to \cite{Odl,FS}. 

\subsection*{Acknowledgement}
We are very grateful to an anonymous referee for proposing a simplified proof of Lemma \ref{lem_transactz2zr} and spotting a crucial mistake in a previous version of Proposition \ref{prp_closedform}.

\section{Preliminaries}

\subsection{Notation and set-up}
Given $n\in\NN$, set $[n] := \{1,\ldots, n\}$. Given a set $A$, $\sym(A)$ will denote the symmetric group on $A$ and $e\in \sym(A)$ will denote the trivial element. We will write $\sym_n = \sym([n])$.

We let
\[\inv(A)=\st{\pi\in\sym(A)}{\pi^2=e}\]
be the set of involutions in $\sym(A)$ and will again write $\inv_n = \inv([n])$. Given $k\leq n/2$, we write $\inv_{n,k}$ for the subset of $\inv_n$ consisting of involutions with $k$ $2$-cycles.

If $U\subset \sym_n$ is a subset, we will denote the centralizer of $U$ in $\sym_n$ by $Z(U)$ and the subgroup of $\sym_n$ generated by $U$ by $\langle U\rangle$. Moreover, if $V\subset \sym_n$ is another subset, we will write
\[Z_V(U) = Z(U)\cap V.\]

Given a finite graph $\mathcal{G}$ we will denote its vertex and edge sets by $V(\mathcal{G})$ and $E(\mathcal{G})$ respectively. $\cox(\mathcal{G})$ will denote the associated right-angled Coxeter group. That is
\[\cox(\mathcal{G}) = \langle \sigma_v,\; v\in V(\mathcal{G})|\: \sigma_v^2=e \; \forall v\in V(\mathcal{G}),\; [\sigma_v,\sigma_w]=e\; \forall \{v,w\}\in E(\mathcal{G}) \rangle.\]

\subsection{The exponential generating function}
Let $G$ be a finitely generated group. The exponential generating function for the seqeuence $h_n(G)$ is well known (see for instance \cite{Mul2}). Let us write
\[ F_G(x) = \sum_{n=0}^\infty \frac{h_n(G)}{n!} x^n\]
for this exponential generating function. It takes the following form:

\begin{lem}\label{lem_expprinc} Let $G$ be a finitely generated group. For all $x\in\CC$ we have
\[F_G(x) = \exp\left(\sum_{i=1}^{\infty} \frac{s_i(G)}{i} x^i\right).\]
\end{lem}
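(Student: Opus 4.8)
The plan is to prove this by relating homomorphisms $\Gamma\to\sym_n$ to actions of $\Gamma$ on the set $[n]$, and then decomposing such actions into transitive ones. First I would recall the standard bijection: a homomorphism $\phi\colon G\to\sym_n$ is the same thing as an action of $G$ on $[n]$, and the orbits of this action partition $[n]$ into $G$-sets on which $G$ acts transitively. A transitive action of $G$ on a set of size $i$ corresponds (after choosing a base point) to a subgroup of index $i$, but forgetting the base point means each index-$i$ subgroup gives rise to exactly $i!/i = (i-1)!$ \emph{labeled} transitive $G$-sets of size $i$ on a fixed $i$-element subset, so the number of transitive actions of $G$ on a fixed set of size $i$ is $(i-1)!\,s_i(G)$. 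Equivalently, the exponential generating function for transitive $G$-actions is $\sum_{i\ge 1}\frac{(i-1)!\,s_i(G)}{i!}x^i=\sum_{i\ge 1}\frac{s_i(G)}{i}x^i$.

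Next I would invoke the \emph{exponential formula} (the labeled structure version): if a class of labeled combinatorial structures is built by choosing a set partition of $[n]$ and placing a ``connected'' structure on each block, then the EGF of all structures is $\exp$ of the EGF of connected structures. Here the ``structures'' are $G$-actions on $[n]$, the ``connected'' ones are the transitive actions, and the decomposition into orbits is exactly the partition-into-connected-pieces decomposition. Applying the exponential formula with the EGF of transitive actions computed above yields
\[
F_G(x)=\sum_{n=0}^\infty\frac{h_n(G)}{n!}x^n=\exp\!\left(\sum_{i=1}^\infty\frac{s_i(G)}{i}x^i\right),
\]
which is the claimed identity. Finally I would address convergence: the lemma asserts the identity for all $x\in\CC$, so strictly speaking one should note that both sides are to be interpreted as formal power series (the identity of coefficients is what the combinatorial argument gives), and the ``for all $x\in\CC$'' reading holds on the common domain of convergence — in general $F_G$ has positive radius of convergence only in restricted cases, so I would phrase the conclusion as an identity of formal power series that moreover holds analytically wherever the right-hand side converges.

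The main obstacle is not conceptual but bookkeeping: getting the factor $\frac{s_i(G)}{i}$ exactly right, i.e. carefully justifying that transitive actions of $G$ on an $i$-set are counted by $(i-1)!\,s_i(G)$ via the orbit–stabilizer correspondence with base points, and making sure the exponential formula is applied to the correct notion of ``connected.'' One should also be slightly careful that an ``action'' in this count is an arbitrary homomorphism (no faithfulness or transitivity assumed globally), so that every homomorphism decomposes uniquely according to its orbit partition — this uniqueness is what licenses the exponential formula. Since this lemma is classical (it appears in \cite{Mul2} and is a form of Wohlfahrt's / the ``exponential principle'' in subgroup counting), I would keep the proof short and cite the exponential formula rather than re-derive it.
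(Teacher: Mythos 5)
Your proposal is correct and is exactly the standard ``exponential principle'' argument that the cited reference \cite{Mul2} uses; the paper itself gives no proof of Lemma \ref{lem_expprinc} and simply refers to it as well known. The key counting step --- that transitive $G$-actions on a labeled $i$-set number $(i-1)!\,s_i(G)$ because, once $\stab(1)=H$ is fixed, the $(i-1)!$ ways of labeling the remaining cosets of $G/H$ give pairwise distinct actions (a $G$-equivariant bijection of a transitive $G$-set fixing a point is the identity) --- is handled correctly, and the exponential formula then applies verbatim with orbits playing the role of connected components. Your caveat about the phrase ``for all $x\in\CC$'' is also well taken: the identity is one of formal power series, and holds analytically only where the series converge, which by Proposition 2.1 of \cite{BPR} is a genuine restriction (it fails for, e.g., free groups), so the paper's phrasing is a slight abuse.
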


\section{Virtually cyclic right-angled Coxeter groups}

\subsection{Classification}
Now we specialize to virtually cyclic right-angled Coxeter groups. An example of such a group is $\cox(\mathcal{A}_r)$. Here, for $r\in\NN$, $\mathcal{A}_r$ is the graph obtained by taking the completee graph $\mathcal{K}_r$ and attaching two vertices that share an edge with each vertex in $\mathcal{K}_r$ but not with each other. For example, $\mathcal{A}_0$ consists of two vertices that do not share an edge and $\mathcal{A}_1$ is the line on three vertices.

We have
\[\cox(\mathcal{A}_r) \simeq (\ZZ/2\ZZ * \ZZ/2\ZZ) \times (\ZZ/2\ZZ)^r.\]
since the infinite dihedral group $(\ZZ/2\ZZ * \ZZ/2\ZZ)$ has an index $4$ subgroup isomorphic to $\ZZ$, $\cox(\mathcal{A}_r)$ is indeed virtually cyclic.

Our first observation is that all infinite virtually cyclic right-angled Coxeter groups are of form above:
\begin{lem}\label{lem_class} Let $\Gamma$ be an infinite virtually cyclic right-angled Coxeter group. Then there exists an $r\in\NN$ so that 
\[\Gamma = \cox(\mathcal{A}_r).\]
\end{lem}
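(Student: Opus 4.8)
The plan is to analyze the structure of a right-angled Coxeter group $\Gamma = \cox(\mathcal{G})$ directly in terms of its defining graph $\mathcal{G}$, using two classical facts about RACGs: first, that $\cox(\mathcal{G})$ splits as a direct product $\cox(\mathcal{G}_1) \times \cox(\mathcal{G}_2)$ precisely when $\mathcal{G}$ is a join $\mathcal{G}_1 * \mathcal{G}_2$ (every vertex of $\mathcal{G}_1$ adjacent to every vertex of $\mathcal{G}_2$); and second, that $\cox(\mathcal{G})$ splits as a nontrivial free product precisely when $\mathcal{G}$ is disconnected. I would begin by recording that $\cox(\mathcal{A}_r)$ is infinite and virtually cyclic, as already observed in the text (it contains $\ZZ$ with finite index via the infinite dihedral factor), so that the content of the lemma is the converse.

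The first main step is to reduce to the case where $\mathcal{G}$ is connected. If $\mathcal{G}$ were disconnected, $\Gamma$ would be a nontrivial free product $\cox(\mathcal{G}_1) * \cox(\mathcal{G}_2)$; a free product of two nontrivial groups is virtually cyclic only if both factors have order $2$, i.e. $\Gamma \cong \ZZ/2\ZZ * \ZZ/2\ZZ$ is the infinite dihedral group, which is $\cox(\mathcal{A}_0)$. So from now on $\mathcal{G}$ is connected. The second step handles the case where $\mathcal{G}$ is not a nontrivial join: then a theorem of Davis–Januszkiewicz (or the standard ``no-$\square$'' criterion for hyperbolicity / the fact that a RACG over a connected graph that is not a join and not a single point contains a nonabelian free subgroup, via two non-adjacent vertices whose link is not everything) shows $\Gamma$ contains a nonabelian free group unless $\mathcal{G}$ is a single vertex — and a single vertex gives the finite group $\ZZ/2\ZZ$, excluded since $\Gamma$ is infinite. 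Hence $\mathcal{G} = \mathcal{G}_1 * \mathcal{G}_2$ is a nontrivial join, so $\Gamma = \cox(\mathcal{G}_1) \times \cox(\mathcal{G}_2)$.

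The third step iterates the join decomposition. Write $\mathcal{G} = \mathcal{J} * \mathcal{G}_0$ where $\mathcal{J}$ is the ``join core'' consisting of all vertices adjacent to every other vertex (equivalently, decompose off all the complete-graph factors), so $\cox(\mathcal{J}) = (\ZZ/2\ZZ)^r$ for $r = |V(\mathcal{J})|$ and $\mathcal{G}_0$ has no universal vertex. Then $\Gamma = (\ZZ/2\ZZ)^r \times \cox(\mathcal{G}_0)$, and since $\Gamma$ is infinite and virtually cyclic, $\cox(\mathcal{G}_0)$ must be infinite and virtually cyclic with $\mathcal{G}_0$ having no universal vertex. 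If $\mathcal{G}_0$ were a nontrivial join it would have a further complete-graph factor unless it splits as a product of two infinite groups, which cannot be virtually cyclic; so $\mathcal{G}_0$ is not a nontrivial join, and by the second step (applied to the connected non-join graph $\mathcal{G}_0$, noting it is not a point since $\cox(\mathcal{G}_0)$ is infinite) we would get a nonabelian free subgroup unless $\mathcal{G}_0$ is disconnected — forcing $\cox(\mathcal{G}_0) = \ZZ/2\ZZ * \ZZ/2\ZZ$ and $\mathcal{G}_0 = \mathcal{A}_0$. Therefore $\mathcal{G} = \mathcal{K}_r * \mathcal{A}_0 = \mathcal{A}_r$ and $\Gamma = \cox(\mathcal{A}_r)$, as claimed.

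The step I expect to be the main obstacle is the dichotomy in the second step: cleanly establishing that a RACG over a connected graph which is neither a single point nor a nontrivial join contains a nonabelian free subgroup (so cannot be virtually cyclic). The cleanest route is to invoke the known characterization — $\cox(\mathcal{G})$ is virtually abelian if and only if $\mathcal{G}$ is a join of complete graphs and empty graphs on $\leq 2$ vertices, or more directly, $\cox(\mathcal{G})$ contains a nonabelian free subgroup iff the complement graph $\overline{\mathcal{G}}$ has a connected component with more than one edge — but if we want to keep the argument self-contained one can instead exhibit the free subgroup by hand: a connected non-join graph on $\geq 2$ vertices contains an induced path on $3$ vertices $a - b - c$ with $a \not\sim c$ together with a fourth vertex $d$ not adjacent to one of $a, c$ (using non-join-ness to find the missing edge), and then a ping-pong argument on the appropriate pair of infinite-dihedral subgroups produces $F_2$. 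Getting this induced-subgraph bookkeeping exactly right is the fiddly part; everything else is a short deduction from the product/free-product dictionary for RACGs.
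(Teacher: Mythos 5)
Your proposal is correct, but it takes a genuinely different route from the paper's. The paper gives a short, self-contained argument: since $\Gamma$ is infinite, $\mathcal{G}$ has two non-adjacent vertices, say $v_1, v_2$; if any third vertex $u$ were not adjacent to $v_1$, then $\sigma_{v_1}\sigma_{v_2}$ and $\sigma_{v_1}\sigma_u$ would be two infinite-order elements with no common power (their reduced words have different supports, $\{v_1,v_2\}$ versus $\{v_1,u\}$), which is impossible in a virtually cyclic group, and likewise if two vertices $u, w \neq v_1, v_2$ were non-adjacent then $\sigma_{v_1}\sigma_{v_2}$ and $\sigma_u\sigma_w$ would give the same contradiction; so every other vertex is joined to $v_1, v_2$ and to each other, i.e. $\mathcal{G} = \mathcal{A}_r$. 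Your argument instead runs the structural join/free-product dictionary for RACGs and iterates the join decomposition down to $\mathcal{K}_r * \mathcal{A}_0$. Both are valid, but they price the difficulty differently: the paper's route needs only the normal-form fact that the generator support of a reduced Coxeter word is well-defined, plus the elementary observation that any two infinite-order elements of a virtually cyclic group share a power; yours instead leans on the heavier criterion (essentially the Moussong/Tits-alternative dichotomy) that a RACG over a connected non-join graph on more than one vertex contains a non-abelian free subgroup, which is exactly the step you flag as the main obstacle. Your decomposition has the merit of being conceptually clean and slotting into general RACG structure theory, and could be tightened by directly citing that criterion rather than sketching the ping-pong by hand; but if the goal is a quick self-contained proof in this paper, the direct "two incommensurable Dihedral subgroups" argument is the more economical one.
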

\begin{proof} Suppose $\Gamma = \cox(\mathcal{G})$ for some finite graph $\mathcal{G}$. 

Label the vertices of $\mathcal{G}$ by $1,\ldots,s$. Since $\Gamma$ is assumed to be infinite, at least one pair of vertices of $\mathcal{G}$ does not share an edge. Let us suppose these are the vertices $1$ and $2$. We will argue that all other vertices need to be connected to both $1$ and $2$ and also to each other.

First suppose vertex $j>2$ is not connected to the vertex $1$. Then $\sigma_1\sigma_j$ and $\sigma_1\sigma_2$ are two infinite order elements. Moreover, there do not exist $m,n\in\NN$ so that $(\sigma_1\sigma_2)^m = (\sigma_1\sigma_j)^n$. This violates being virtually cyclic.

We conclude that vertices $3,\ldots,s$ are all connected to both $1$ and $2$. Now suppose there exists a pair of vertices $j,k>2$ that do not share an edge. Then $\sigma_1\sigma_2$ and $\sigma_j\sigma_k$ are a pair of infinite order elements without a common power. 

Putting the two observations together implies the lemma.
\end{proof}

\subsection{A closed formula}
Since the case of finite right-angled Coxeter groups is well understood, we will from hereon consider $\cox(\mathcal{A}_r)$. 

In order to obtain the asymptotes we are after, we are in need of a closed formula for the number of permutation representations of $\cox(\mathcal{A}_r)$. To this end, we first record two lemmas on the permutation representations of $(\ZZ/2\ZZ)^r$.

\begin{lem}\label{lem_transactz2zr}
\begin{itemize}
\item[(a)] Let $H<\sym_{2^r}$ be a transitive subgroup so that $H\simeq (\ZZ/2\ZZ)^r$. Then
\[Z_{\inv_{2^r}}(H) = H.\]
\item[(b)]
Suppose $l\leq r$ and let $\varphi \in\Hom\left((\ZZ/2\ZZ)^r, \sym_{2^l}\right)$ be so that $\varphi\left((\ZZ/2\ZZ)^r\right)$ acts on $[2^l]$ transitively. Then
\[ \varphi\left( (\ZZ/2\ZZ)^r\right) \simeq (\ZZ/2\ZZ)^l.\]
\end{itemize}
\end{lem}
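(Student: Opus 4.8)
The plan is to treat the two parts in order, since (a) will feed directly into (b). For part (a), the key observation is that a transitive abelian subgroup $H < \sym_{2^r}$ acts freely (an abelian group acting transitively acts regularly), so we may identify $[2^r]$ with $H$ itself, on which $H$ acts by left translations. Under this identification, the full centralizer $Z_{\sym_{2^r}}(H)$ is exactly the group of right translations, which is again a copy of $H$; in particular $Z_{\sym_{2^r}}(H) = H$ because $H$ is abelian (left and right translations by the same group coincide set-wise). Intersecting with $\inv_{2^r}$ changes nothing since $H \cong (\ZZ/2\ZZ)^r$ already consists entirely of involutions (and the identity). So $Z_{\inv_{2^r}}(H) = Z_{\sym_{2^r}}(H) = H$. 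I would spell out the ``acts freely $\Rightarrow$ regular representation $\Rightarrow$ centralizer is the opposite regular representation'' chain carefully, as this is the crux.

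For part (b), let $W = \varphi\big((\ZZ/2\ZZ)^r\big) < \sym_{2^l}$; this is a transitive subgroup which is an abelian group of exponent dividing $2$, hence $W \cong (\ZZ/2\ZZ)^t$ for some $t$, and I must show $t = l$. By the same regularity argument as in (a), a transitive abelian subgroup of $\sym_{2^l}$ acts freely, so $|W|$ divides $2^l$, giving $t \leq l$; and transitivity forces $|W| \geq 2^l$, so in fact $|W| = 2^l$ and $t = l$. (Concretely: orbit-stabilizer gives $2^l = |W|/|\mathrm{Stab}_W(\mathrm{pt})|$, and freeness makes the stabilizer trivial.) This is short once (a)'s free-action mechanism is isolated as a reusable sublemma.

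The main obstacle is really just packaging the elementary fact cleanly: \emph{a transitive action of a finite abelian group is regular, and its centralizer in the ambient symmetric group is the isomorphic ``right-multiplication'' copy}. Everything else is bookkeeping. One mild subtlety to address in (a) is why $Z_{\sym_{2^r}}(H)$ is not strictly larger than $H$: this is precisely where commutativity of $H$ is used, since for a general regular subgroup the centralizer is the opposite group, which equals $H$ exactly when $H$ is abelian. I would state this as an explicit intermediate claim before concluding. Given the referee's acknowledgement, I expect the intended write-up of (a) to be exactly this regular-representation argument rather than a hands-on combinatorial count.
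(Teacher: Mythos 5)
Your proposal is correct and follows essentially the same route as the paper: both reduce part (a) to the observation that a transitive abelian subgroup acts regularly, hence that the centralizer (acting freely because $H$ is transitive) has order at most $2^r$ and equals $H$; and both deduce part (b) from the same freeness mechanism applied to the transitive image $Q \cong (\ZZ/2\ZZ)^t$. The paper's version is terser (it does not spell out the right-regular-representation picture), but the underlying argument is identical.
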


\begin{proof} For item (a) we note that $Z_{\sym_{2^r}}(H)$ is an abelian group that acts transitively on $[2^r]$, from which it follows that it acts freely, which implies our claim.

For item (b), write $\varphi\left( (\ZZ/2\ZZ)^r\right) = Q<\sym_{2^l}$. Since $(\ZZ/2\ZZ)^r$ surjects onto $Q$, $Q$ is a finite abelian group in which every non-trivial element has order $2$. The fact that $Q$ acts on $[2^l]$ transitively, implies it has order $2^l$. The only groups that fit the description above is $(\ZZ/2\ZZ)^k$ for some $k\geq l$. Item (a) implies that $k=l$.
\end{proof}

We can split each $\varphi\in\Hom\left((\ZZ/2\ZZ)^r,\sym_n\right)$ into a product of homomorphisms into small symmetric groups according to the orbits of $\varphi\left((\ZZ/2\ZZ)^r\right)$. That is, we can identify $\varphi=\varphi_1\times \ldots \times \varphi_m$, where $\varphi_i\left((\ZZ/2\ZZ)^r\right)$ acts transitively on $[2^{l_i}]$ for some $0\leq i \leq r$. Lemma \ref{lem_transactz2zr}(b) above implies that we can identify $\varphi_i$ with a surjective homomorphism $(\ZZ/2\ZZ)^r \to (\ZZ/2\ZZ)^{l_i}$. With a slight abuse of notation, we will also call this homomorphism $\varphi_i$. Moreover, given a surjective homomorphism $\varphi:(\ZZ/2\ZZ)^r \to (\ZZ/2\ZZ)^l$ and $k\in\NN$, we will write 
\[\varphi^k= \varphi \times\ldots \times \varphi : (\ZZ/2\ZZ)^r \to \left((\ZZ/2\ZZ)^l \right)^k.\]

The next lemma is about how the centralizer $Z_{\sym_n}(\varphi\left( (\ZZ/2\ZZ)^r\right))$ depends on the decomposition of $\varphi$ as a product of homomorphisms.

\begin{lem}\label{lem_centdec}
Suppose $s\in\NN$ and $G_1,\ldots G_s<(\ZZ/2\ZZ)^r$ are all distinct subgroups. Moreover, let 
\[\varphi_{i,j}:(\ZZ/2\ZZ)^r \to (\ZZ/2\ZZ)^{l_i}\]
$j=1,\ldots m_i$, $i=1,\ldots,s$ be distinct surjective homomorphisms so that
\[\ker(\varphi_{i,j}) = G_i\]
for all $j=1,\ldots m_i$, $i=1,\ldots,s$. Let
\[\varphi:=  \bigtimes_{\substack{i=1,\ldots,s,\\ j=1,\ldots m_i}} \varphi_{i,j}^{k_{i,j}}:(\ZZ/2\ZZ)^r \to \bigtimes_{i=1,\ldots,s} \left(\ZZ/2\ZZ)^{l_i}\right)^{m_i} < \sym\left((\ZZ/2\ZZ)^{\sum_i m_i l_i}\right) \simeq \sym_{n} \]
where $n=2^{\sum_i m_i l_i}$. Then
\[Z_{\sym_n} (\varphi\left((\ZZ/2\ZZ)^r\right)) \simeq \bigtimes_{i=1}^s \left((\ZZ/2\ZZ)^{l_i} \wr \sym_{m_i} \right)\]
\end{lem}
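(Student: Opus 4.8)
The plan is to compute the centralizer of $\varphi\big((\ZZ/2\ZZ)^r\big)$ inside $\sym_n$ directly from the block structure of the action, using the transitive case (Lemma~\ref{lem_transactz2zr}(a)) as the building block. Write $Q = \varphi\big((\ZZ/2\ZZ)^r\big)$. By construction $[n]$ is identified with the disjoint union of $N := \sum_i m_i$ blocks, one for each factor $\varphi_{i,j}^{(t)}$ ($t=1,\ldots,k_{i,j}$, though I will first treat all $k_{i,j}=1$ and then remark that repeated factors only enlarge the wreath symmetry — more on this below), each block being a copy of $(\ZZ/2\ZZ)^{l_i}$ on which $Q$ acts via the transitive action of $(\ZZ/2\ZZ)^{l_i}$ by translation. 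So $Q$ is a subdirect product of the block images $Q_b \simeq (\ZZ/2\ZZ)^{l_{i(b)}}$, and two blocks $b, b'$ receive ``the same'' $Q$-action precisely when the corresponding homomorphisms have the same kernel, i.e.\ when $i(b) = i(b')$.

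First I would recall the general principle for centralizers of a subgroup $Q \le \sym_n$ acting with orbit decomposition $[n] = \bigsqcup_b O_b$: an element of $Z_{\sym_n}(Q)$ must permute the $Q$-orbits among themselves, and can only send $O_b$ to $O_{b'}$ if the two $Q$-sets $O_b$ and $O_{b'}$ are isomorphic as $Q$-sets; moreover, having fixed such a matching, the centralizing element restricted to each orbit is determined up to the centralizer of the $Q$-action on that single orbit. Since here every orbit is a \emph{regular} $(\ZZ/2\ZZ)^{l_i}$-set (transitive and, being abelian, free), Lemma~\ref{lem_transactz2zr}(a) — or rather the elementary fact that the centralizer of a regular action of an abelian group is the group itself, acting by right translations — gives that the ``per-orbit'' centralizer of each block is a copy of $(\ZZ/2\ZZ)^{l_i}$. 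Two orbits $O_b, O_{b'}$ are isomorphic as $Q$-sets iff the stabilizers coincide, i.e.\ iff $\ker \varphi_{i(b),\cdot} = \ker \varphi_{i(b'),\cdot}$, which by hypothesis happens exactly when $i(b) = i(b')$. Hence the blocks group into $s$ classes of mutually $Q$-isomorphic orbits, the $i$-th class having $m_i$ members.

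Assembling these observations: $Z_{\sym_n}(Q)$ preserves this partition into $s$ classes (an element of one class cannot be matched to another), so $Z_{\sym_n}(Q) \simeq \prod_{i=1}^s Z_i$, where $Z_i$ is the centralizer of the $Q$-action on the sub-permutation-set consisting of the $m_i$ orbits in class $i$. Within class $i$ we have $m_i$ mutually isomorphic regular $(\ZZ/2\ZZ)^{l_i}$-blocks: an element of $Z_i$ freely permutes these $m_i$ blocks (contributing a factor $\sym_{m_i}$) and, for each block, acts by a right translation from $(\ZZ/2\ZZ)^{l_i}$ (contributing $(\ZZ/2\ZZ)^{l_i}$ per block, i.e.\ the base group $\big((\ZZ/2\ZZ)^{l_i}\big)^{m_i}$ of the wreath product); one checks these fit together exactly as the wreath product $(\ZZ/2\ZZ)^{l_i} \wr \sym_{m_i}$, because conjugating a translation on one block by a block-swap yields the corresponding translation on the other block. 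Taking the product over $i$ yields the claimed formula
\[
Z_{\sym_n}(Q) \simeq \bigtimes_{i=1}^s \Big((\ZZ/2\ZZ)^{l_i} \wr \sym_{m_i}\Big).
\]
I expect the main obstacle to be bookkeeping rather than conceptual: one must be careful that distinct surjections $\varphi_{i,j}$ with the \emph{same} kernel $G_i$ really do produce $Q$-isomorphic blocks (true, since the target $(\ZZ/2\ZZ)^{l_i}$ is canonically $(\ZZ/2\ZZ)^r / G_i$ and any two such surjections differ by an automorphism of the target, which intertwines the two regular actions), and conversely that surjections with \emph{different} kernels give non-isomorphic $Q$-sets (true, since the stabilizer of a point is exactly the kernel). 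A secondary subtlety is the role of the multiplicities $k_{i,j}$: repeated factors $\varphi_{i,j}^{k_{i,j}}$ simply add $k_{i,j}$ further copies of the same block, so the effective count of mutually isomorphic blocks in class $i$ is $m_i$ (as written in the statement, where $m_i$ already counts the distinct $\varphi_{i,j}$ — here I would either absorb the $k_{i,j}$ into the indexing or note that the statement's $m_i$ is to be read as the total number of blocks with kernel $G_i$), and the argument is unchanged. No new ideas beyond Lemma~\ref{lem_transactz2zr} and the standard centralizer-of-a-permutation-group bookkeeping are needed.
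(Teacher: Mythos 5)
Your proof is correct and takes essentially the same route as the paper's: both decompose $[n]$ into $Q$-orbits, observe that a centralizing element can only match orbits giving isomorphic $Q$-sets (equivalently, with the same kernel / point-stabilizer), use Lemma~\ref{lem_transactz2zr}(a) for the per-orbit centralizer, and assemble the factors into wreath products. The only difference is presentational — you invoke the general stabilizer-characterization of isomorphic orbits where the paper does a direct hands-on check that mismatched kernels obstruct centralizing — and your remark about the $m_i$ versus $k_{i,j}$ bookkeeping is a fair reading of a minor notational looseness in the statement that the paper's own proof does not address.
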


\begin{proof} The lemma essentially consists of two claims: the fact that the centralizer decomposes as a product  and the fact that the factors take the form of a wreath product.

To see the product structure, note that when given $G < \sym_n$ and $\pi \in Z_{\sym_n}(G)$, $\pi$ acts on the orbits of $G$ on $[n]$. That is, for every $G$-orbit $A$ of cardinality $k$, there exists a $G$-orbit $B$ of cardinality $k$, so that
\[\pi(A) = B.\]
Because $\pi$ is a bijection, it can only permute orbits of the same size. This already implies that the centralizer splits as a product over the different orbit sizes. 

To get the full product decomposition, we need to understand which orbits can be permuted. We claim that two $\varphi\left((\ZZ/2\ZZ)^r\right)$-orbits $A, B \subset [n]$ of the same cardinality can be permuted by an element of $Z_{\sym_n}\left(\varphi\left((\ZZ/2\ZZ)^r\right)\right)$ if and only if the homomorphisms $\varphi_A,\varphi_B:(\ZZ/2\ZZ)^r \to (\ZZ/2\ZZ)^l$ they define have the same kernel. What we really need to show is that if $\ker(\varphi_A)\neq \ker(\varphi_B)$, then these orbits cannot be permuted. So, suppose $\sigma \in \sym_n$ so that 
\[\sigma(A)=B.\]
Because $\ker(\varphi_A)\neq \ker(\varphi_B)$ and both $\varphi_A$ and $\varphi_B$ are surjections onto $(\ZZ/2\ZZ)^l$ for the same $l$, we have that $\ker(\varphi_A)\smallsetminus \ker(\varphi_B) \neq \emptyset$. So, let $g\in \ker(\varphi_A)\smallsetminus \ker(\varphi_B) $ and let $a\in A$. Since $g\notin \ker(\varphi_B)$, there exists an $a\in A$ so that
\[\varphi(g)\left(\sigma(a)\right)\neq \sigma(a).\] 
However, because $\varphi(g)a=a$, we obtain
\[\sigma\left(\varphi(g) \sigma(a)\right)= \sigma(a).\] 
So, $\sigma$ cannot lie in  $Z_{\sym_n}\left(\varphi\left((\ZZ/2\ZZ)^r\right)\right)$. 

This implies that  $Z_{\sym_n}\left(\varphi\left((\ZZ/2\ZZ)^r\right)\right)$ indeed splits as a product according to the decomposition of $\varphi$ into homomorphisms with distinct kernels. Moreover, the fact that two orbits that define the same homomorphism \emph{can} be permuted, together with  Lemma \ref{lem_transactz2zr}(a) implies that the factos take the form we claim.
\end{proof}

In order to count the involutions in these centralizers, we record the following:

\begin{lem}\label{lem_invcent}
The number of involutions in $(\ZZ/2\ZZ)^l \wr \sym_k$ equals
\[  \sum_{r=0}^{\floor{k/2}} \frac{k!}{(k-2r)! \cdot r!} 2^{l\cdot k - (l+1)\cdot r}. \]
\end{lem}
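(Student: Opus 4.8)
The plan is to count involutions in $(\ZZ/2\ZZ)^l \wr \sym_k$ directly by using the standard description of the wreath product as $\left((\ZZ/2\ZZ)^l\right)^k \rtimes \sym_k$, writing each element as $(\mathbf{v},\tau)$ with $\mathbf{v} = (v_1,\ldots,v_k) \in \left((\ZZ/2\ZZ)^l\right)^k$ and $\tau\in\sym_k$, and the multiplication rule $(\mathbf{v},\tau)(\mathbf{w},\rho) = (\mathbf{v} + \tau\cdot\mathbf{w}, \tau\rho)$, where $(\tau\cdot\mathbf{w})_i = w_{\tau^{-1}(i)}$. Squaring gives $(\mathbf{v},\tau)^2 = (\mathbf{v} + \tau\cdot\mathbf{v}, \tau^2)$, so the involution condition (including the identity) is $\tau^2 = e$ and $\mathbf{v} + \tau\cdot\mathbf{v} = 0$, i.e.\ $v_i = v_{\tau(i)}$ for all $i$ (using that $(\ZZ/2\ZZ)^l$ has characteristic $2$, so $-\mathbf{v}=\mathbf{v}$ and the condition $\mathbf{v} = \tau\cdot\mathbf{v}$ simply says $\mathbf{v}$ is constant on the cycles of $\tau$).

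\textbf{Key steps.} First I would fix an involution $\tau\in\sym_k$; since $\tau^2=e$, it has some number $r$ of transpositions and $k-2r$ fixed points, where $0\leq r\leq \floor{k/2}$. Second, I would count the number of such $\tau$: choosing $r$ disjoint transpositions from $[k]$ gives $\frac{k!}{(k-2r)!\,r!\,2^r}$. Third, for a fixed such $\tau$ I would count the vectors $\mathbf{v}$ with $v_i = v_{\tau(i)}$: $\mathbf{v}$ must be constant on each of the $r$ two-element cycles and arbitrary on each of the $k-2r$ fixed points, giving one free element of $(\ZZ/2\ZZ)^l$ per cycle and per fixed point, hence $r + (k-2r) = k - r$ free choices, so $\left(2^l\right)^{k-r} = 2^{l(k-r)}$ vectors. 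Fourth, multiplying and summing over $r$ yields
\[
\sum_{r=0}^{\floor{k/2}} \frac{k!}{(k-2r)!\,r!\,2^r}\cdot 2^{l(k-r)}
= \sum_{r=0}^{\floor{k/2}} \frac{k!}{(k-2r)!\,r!}\, 2^{lk - lr - r}
= \sum_{r=0}^{\floor{k/2}} \frac{k!}{(k-2r)!\,r!}\, 2^{lk - (l+1)r},
\]
which is exactly the claimed formula.

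\textbf{Main obstacle.} There is no real obstacle here: the argument is an elementary bookkeeping exercise once the semidirect-product description of $(\ZZ/2\ZZ)^l \wr \sym_k$ and its multiplication law are written out. The only point that requires a moment's care is the interplay between the $\sym_k$-action permuting coordinates and the involution condition — specifically getting the indices right so that "$\mathbf{v}+\tau\cdot\mathbf{v}=0$" correctly becomes "$\mathbf{v}$ is constant on cycles of $\tau$" — and remembering that in characteristic $2$ the sign issues disappear, so that the identity element is correctly included in the count. One should also double-check the factor $2^r$ in the count of fixed-point-free involutions on a $2r$-element set versus on $[k]$, but this is standard.
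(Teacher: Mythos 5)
Your proof is correct and takes essentially the same approach as the paper: both use the standard $\left((\ZZ/2\ZZ)^l\right)^k \rtimes \sym_k$ description, observe that an element is an involution exactly when the $\sym_k$-component $\tau$ is an involution and the vector component is constant on the cycles of $\tau$, and then sum $\frac{k!}{(k-2r)!\,r!\,2^r}\cdot 2^{l(k-r)}$ over $r$. The only difference is cosmetic (you write the wreath product additively with the explicit cocycle, the paper multiplicatively); the bookkeeping is identical.
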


\begin{proof}
In order to count the number of involutions, we need to understand the wreath product structure. We can write
\[(\ZZ/2\ZZ)^l \wr \sym_k = \st{(a, \pi)}{ a \in \bigg( (\ZZ/2\ZZ)^l \bigg)^k,\; \pi \in \sym_k}.\]
Multiplication of two elements is given by:
\[ ((a_1,\ldots, a_k),\; \pi) \cdot ((b_1,\ldots, b_k),\; \sigma) = ((a_{\sigma(1)} b_1, \ldots, a_{\sigma(k)}b_k), \pi\sigma).\]
This means that an element $(a,\pi)$ is an involution if and only if 
\[((a_{\pi(1)}a_1,\ldots, a_{\pi(k)} a_k),\; \pi^2) = ((e,\ldots,e),\;e),\]
which is equivalent to $\pi$ being an involution and that (using the fact that $(\ZZ/2\ZZ)^r$ consists entirely of involutions) $a_{\pi(i)} = a_i$ for all $i=1,\ldots, k$.

In order to count the number of involutions, note that $\sym_k$ contains $\binom{k}{2r} (2r)!!$ involutions with $k-2r$ fixed points. Here $(2r)!! = (2r-1)(2r-3)\cdots 3\cdot 1 = (2r)!/(2^r\cdot r!)$. After choosing the involution $\pi$ in $\sym_k$, there is a choice of one element of $(\ZZ/2\ZZ)^l$ left per orbit of $\pi$. So the number of involutions in $(\ZZ/2\ZZ)^l\wr \sym_k$ is
\[ \sum_{r=0}^{\floor{k/2}} \binom{k}{2r} (2r)!! \; 2^{l(k-r)} = \sum_{r=0}^{\floor{k/2}} \frac{k!}{(k-2r)! \cdot r!} 2^{l\cdot k - (l+1)\cdot r}. \]
\end{proof}

This now gives us the following expression for the number of permutation representations of $\cox(\mathcal{A}_r)$:
\begin{prp}\label{prp_closedform} Let $r\in\NN$ and $\Gamma=\cox(\mathcal{A}_r)$. Then
\[h_n(\Gamma) = n! \sum_{\substack{i_0,\ldots,i_r \\ \sum_j i_j 2^j=n}} \sum_{\substack{k_{j,1},\ldots, k_{j,s_{2^j}},\\ \sum_m k_{j,m} = i_j } }  \prod_{j=0}^r 2^{j\;i_j} \prod_{m=1}^{s_{2^j}} k_{j,m} ! \;  \left(\sum_{l_{j,m}=0}^{\floor{k_{j,m}/2}} \frac{1}{(k_{j,m}-2l_{j,m})! \cdot l_{j,m}!} 2^{ - (j+1)\cdot l_{j,m}}\right)^2,\]
for all $n\in\NN$, where
\[s_{2^j} = s_{2^j}((\ZZ/2\ZZ)^r) = \frac{\prod_{l=0}^{j-1} (2^r-2^l)}{\prod_{l=0}^{j-1} (2^j-2^l)}. \]
for all $j=0,\ldots, r$.
\end{prp}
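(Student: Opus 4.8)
The plan is to count homomorphisms $\varphi \colon (\ZZ/2\ZZ)^r \to \sym_n$ by stratifying according to the orbit decomposition of the image, exactly as set up in the discussion preceding the statement. Every such $\varphi$ decomposes as $\varphi = \varphi_1 \times \dots \times \varphi_t$ where each $\varphi_i$ has transitive image acting on a block of size $2^{l_i}$ for some $0 \le l_i \le r$ (by Lemma \ref{lem_transactz2zr}(b)); grouping the transitive pieces by their kernel, a subgroup $G \le (\ZZ/2\ZZ)^r$ of index $2^j$ corresponds to a surjection onto $(\ZZ/2\ZZ)^j$, and there are exactly $s_{2^j}$ such subgroups. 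So for each $j \in \{0,\dots,r\}$ let $i_j$ be the total number of transitive blocks of size $2^j$ appearing; then $\sum_j i_j 2^j = n$, which is the outer sum. Among the $i_j$ blocks of size $2^j$, the $k_{j,1},\dots,k_{j,s_{2^j}}$ record how many are attached to each of the $s_{2^j}$ index-$2^j$ subgroups, so $\sum_m k_{j,m} = i_j$; this is the inner sum.

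Next I would count, for a fixed such combinatorial type, the number of $\varphi$ realizing it. Partitioning $[n]$ into the prescribed system of blocks and then fixing, on each block of size $2^j$, one of the $s_{2^j}$ transitive actions: the number of ordered set partitions of $[n]$ into $i_j$ labelled blocks of size $2^j$ for each $j$, divided by the symmetries permuting blocks with identical data, combined with the number of ways to realize a transitive $(\ZZ/2\ZZ)^j$-action on a fixed block, should collapse (after the standard multinomial bookkeeping and using $|{\rm Aut}|$-counting, i.e. the number of labelled transitive actions of $(\ZZ/2\ZZ)^r$ on $2^j$ points with a fixed kernel is $2^{2j}/2^j \cdot(\text{something})$ — more precisely the transitive-action count produces a factor whose block-product, after dividing by the overcounting $\prod_m k_{j,m}!$ and the within-block automorphisms, yields the $n!$, the $\prod_j 2^{j i_j}$, and reinstates a $\prod_m k_{j,m}!$) into the shape displayed. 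The cleanest route is the standard ``exponential formula'' / orbit-counting identity: the number of $\varphi$ of a given type equals $n!$ times a product over blocks of $(\text{transitive count on that block})/(\text{block size}!)$, times the reciprocal of the symmetry factor $\prod_{j,m} k_{j,m}!$ for permuting identical blocks — except that here the ``identical blocks'' attached to the same subgroup are \emph{not} actually identical as labelled objects, which is what restores the $\prod_m k_{j,m}!$ in the numerator.

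The squared sum over $l_{j,m}$ is then accounted for as follows: a homomorphism $(\ZZ/2\ZZ)^r \to \sym_n$ that factors through $\cox(\mathcal{A}_r) = (\ZZ/2\ZZ * \ZZ/2\ZZ) \times (\ZZ/2\ZZ)^r$ is the data of a homomorphism $\psi$ of $(\ZZ/2\ZZ)^r$ together with \emph{two commuting involutions in $Z_{\sym_n}(\psi((\ZZ/2\ZZ)^r))$} (the images of the two free generators). By Lemma \ref{lem_centdec}, for $\varphi = \psi$ of the given type the centralizer is $\bigtimes_{j} \bigtimes_{m} \big((\ZZ/2\ZZ)^j \wr \sym_{k_{j,m}}\big)$, so the number of involutions in it is the product over $(j,m)$ of the involution counts from Lemma \ref{lem_invcent}; choosing \emph{two} such involutions independently squares this, giving the $(\,\cdot\,)^2$. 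Reading off Lemma \ref{lem_invcent} with $k = k_{j,m}$, $l = j$ and pulling the $j$-independent factor $2^{j k_{j,m}}$ out (it combines with the $\prod_j 2^{j i_j}$ via $\sum_m k_{j,m} = i_j$, and gets squared — which matches the $2^{2j\cdot(\dots)}$ hidden in the formula once one checks the exponent bookkeeping: $2^{j i_j}$ outside times two copies of $2^{j k_{j,m}}$ inside must reconcile, so one has to be careful that exactly one power of $2^{j i_j}$ survives), leaves precisely the written sum $\sum_{l_{j,m}} \frac{1}{(k_{j,m}-2l_{j,m})!\, l_{j,m}!} 2^{-(j+1) l_{j,m}}$ squared, times $k_{j,m}!$.

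The main obstacle I expect is the exponent bookkeeping for the powers of $2$ and the reconciliation of the symmetry factors: one must verify that the overcounting from permuting same-kernel blocks (a factor $\prod_{j,m} k_{j,m}!$ in the denominator coming from the exponential formula) is exactly cancelled by the $\prod_m k_{j,m}!$ in the numerator of the claimed formula — i.e. that same-kernel blocks genuinely carry distinguishing labelled data — and that the $2$-powers from (i) the number of transitive actions on each block, (ii) the $2^{jk}$ terms in the involution count, and (iii) the two independent involution choices combine to give exactly $\prod_j 2^{j i_j}$ times the squared normalized sum, with no stray factors. The rest is the standard exponential-formula argument and direct substitution of Lemmas \ref{lem_transactz2zr}, \ref{lem_centdec} and \ref{lem_invcent}.
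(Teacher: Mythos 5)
Your route is the same as the paper's: stratify $\Hom((\ZZ/2\ZZ)^r,\sym_n)$ by orbit type $(i_j)$ and kernel multiplicities $(k_{j,m})$, write $h_n(\Gamma)=\sum_{\psi}\card{Z_{\inv_n}(\psi((\ZZ/2\ZZ)^r))}^2$ (two independently chosen involutions in the centralizer for the two $D_\infty$ generators), and evaluate using Lemmas \ref{lem_transactz2zr}, \ref{lem_centdec} and \ref{lem_invcent}. But the bookkeeping you flag as the main obstacle is precisely where your explanation goes wrong, and carried out as written it would give a different (incorrect) formula.

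Concretely, you claim the $\prod_m k_{j,m}!$ in the final answer arises because ``same-kernel blocks are not actually identical as labelled objects,'' so the exponential-formula symmetry division by $\prod_m k_{j,m}!$ should be discarded. That is backwards. The EGF of all homomorphisms $(\ZZ/2\ZZ)^r\to\sym_n$ is $\prod_{j}\prod_{m=1}^{s_{2^j}}\exp\bigl(x^{2^j}/2^j\bigr)$ (one factor per index-$2^j$ subgroup, since there are $(2^j-1)!$ transitive homomorphisms on a block of size $2^j$ with a prescribed kernel, and $(2^j-1)!/(2^j)! = 2^{-j}$), and the $1/k_{j,m}!$ is exactly the Taylor coefficient of $\exp$ — so the count of $\psi$'s of a fixed type is genuinely $n!\prod_j 2^{-j\,i_j}\prod_m 1/k_{j,m}!$, overcounting correction included. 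The $\prod_m k_{j,m}!$ in the displayed formula instead comes from the involution count: Lemma \ref{lem_invcent} gives $z_{i,k}=\prod_j 2^{j\,i_j}\prod_m k_{j,m}!\cdot\bigl(\sum_{l}\tfrac{1}{(k_{j,m}-2l)!\,l!}2^{-(j+1)l}\bigr)$, and squaring this supplies $(\prod_m k_{j,m}!)^2$; multiplying by the $\psi$-count cancels one copy, leaving a single $\prod_m k_{j,m}!$. Dropping the $1/k_{j,m}!$ as you propose would leave $(\prod_m k_{j,m}!)^2$ instead. The same ledger also resolves the power of $2$ you were unsure about: $\prod_j 2^{-j\,i_j}$ from the $\psi$-count times $\prod_j 2^{2j\,i_j}$ from $z_{i,k}^2$ gives $\prod_j 2^{j\,i_j}$. (Minor further error: the number of transitive homomorphisms with a fixed kernel onto $[2^j]$ is $(2^j-1)!$, the count of regular actions of $(\ZZ/2\ZZ)^j$, not $2^{2j}/2^j$.)
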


\begin{proof}
We have
\[h_n(\Gamma) = \sum_{\varphi \; \in \; \Hom\left((\ZZ/2\ZZ)^r,\sym_n\right)} \card{Z_{\inv_n}\left(\varphi\left((\ZZ/2\ZZ)^r\right)\right)}^2\]
Lemma \ref{lem_centdec} tells us that the cardinality $ \card{Z_{\inv_n}\left(\varphi\left((\ZZ/2\ZZ)^r\right)\right)}$ only depends on the way $\varphi$ decomposes into homomorphisms with distinct kernels.

So
\[h_n(\Gamma) = \sum_{\substack{i_0,\ldots,i_r \\ \sum_j i_j 2^j=n}}\; \sum_{\substack{k_{j,1},\ldots, k_{j,s_{2^j}},\\ \sum_m k_{j,m} = i_j } } h_{i,k}\left((\ZZ/2\ZZ)^r\right) \cdot  z_{i,k}^2, \]
where for each $j$, labeling the subgroups of $(\ZZ/2\ZZ)^r$ of index $2^j$ by 
\[G_{j,1},\ldots, G_{j,s^{2^j}},\] 
we write $h_{i,k}\left((\ZZ/2\ZZ)^r\right)$ for the number of homomorphisms $(\ZZ/2\ZZ)^r\to \sym_n$ so that
\begin{itemize}
\item $\varphi\left((\ZZ/2\ZZ)^r\right)$ has $i_j$ orbits of size $2^j$ for $j=0,\ldots,r$ and
\item for $j=0,\ldots,r$, $k_{j,m}$ of the orbits correspond to a map $\rho:(\ZZ/2\ZZ)^r \to (\ZZ/2\ZZ)^j$ with kernel $G_j$.
\end{itemize}
Moreover, $z_{i,k}$ is the number of involutions in the centralizer $Z_{\sym_n}\left(\varphi\left((\ZZ/2\ZZ)^r \right)\right)$ for any such homomorphism $\varphi$.

In order to count $h_{i,k}\left((\ZZ/2\ZZ)^r\right)$, note that there are
\[n! \prod_{j=0}^r \frac{1}{i_j!((2^j)!)^{i_j}}\]
ways to partition $[n]$ into orbits whose sizes are given by $i$, and 
\[\prod_{j=0}^r i_j! \prod_{m=1}^{s_{2^j}} \frac{1}{k_{j,m}!} \]
to choose which groups belong to which orbits.

Furthermore, there are $(2^j-1)!$ different tansitive homomorphisms with kernel $G_{j,m}$ for all $j=0,\ldots, r$, $m=1,\ldots, s_{2^j}$. So we obtain
\[h_{i,k}\left((\ZZ/2\ZZ)^r\right) =  n! \prod_{j=0}^r \left( \frac{1}{((2^j)!)^{i_j}} \prod_{m=1}^{s_{2^j}} ((2^j-1)!)^{k_{j,m}} \frac{1}{k_{j,m}!} \right) = n! \prod_{j=0}^r \frac{1}{2^{j\;i_j}}  \prod_{m=1}^{s_{2^j}}\frac{1}{k_{j,m}!},\]
where we used that $\sum_m k_{j,m} = i_j$. 

Lemmas \ref{lem_centdec} and \ref{lem_invcent} imply that
\begin{multline*}
z_{i,k} = \prod_{j=0}^r \prod_{m=1}^{s_{2^j}} \sum_{l_{j,m}=0}^{\floor{k_{j,m}/2}} \frac{k_{j,m} !}{(k-2l_{j,m})! \cdot l_{j,m}!} 2^{j\cdot k_{j,m} - (j+1)\cdot l_{j,m}} \\ = \prod_{j=0}^r  2^{j\; i_j} \prod_{m=1}^{s_{2^j}} \sum_{l_{j,m}=0}^{\floor{k_{j,m}/2}} \frac{k_{j,m} !}{(k-2l_{j,m})! \cdot l_{j,m}!} 2^{ - (j+1)\cdot l_{j,m}}\end{multline*}

Putting the two together, we obtain the formula we claimed.
\end{proof}

\section{An exponential generating function}

Our next step is to compute the exponential generating function for the sequence\linebreak $(h_n(\cox(\mathcal{A}_r))_n$. That is, we define
\[G_r(x) = \sum_{n=0}^\infty \frac{h_n(\cox(\mathcal{A}_r))}{n!}\; x^n.\]

\begin{thm}\label{thm_genfn} Let $r\in \NN$. Then
\[G_r(x) = \prod_{j=0}^r \left( \left(1- x^{2^{j+1}}\right)^{-s_{2^j}/2} \exp\left(-2^j\; s_{2^j}+\frac{2^j\; s_{2^j}}{1-x^{2^j}}\right)\right)\]
where $s_{2^j}=s_{2^j}((\ZZ/2\ZZ)^r)$.
\end{thm}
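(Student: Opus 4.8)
The plan is to plug the closed formula of Proposition~\ref{prp_closedform} into the exponential generating function, observe that it factors as a product over the subgroups of $(\ZZ/2\ZZ)^r$ of each given index $2^j$, and then identify each factor as the unique formal power series solving a first-order linear ODE. Write
\[ b_k^{(j)} = \sum_{l=0}^{\lfloor k/2\rfloor} \frac{1}{(k-2l)!\,l!}\, 2^{-(j+1)l}, \qquad c_k^{(j)} = 2^{jk}\, k!\, \left(b_k^{(j)}\right)^2, \]
so that, after using $2^{j i_j} = \prod_m 2^{j k_{j,m}}$, the summand in Proposition~\ref{prp_closedform} equals $\prod_{j,m} c_{k_{j,m}}^{(j)}$. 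Since the double sum there runs precisely over all families $(k_{j,m})_{0\le j\le r,\, 1\le m\le s_{2^j}}$ of nonnegative integers subject to $\sum_{j,m} k_{j,m}\, 2^j = n$, and $x^n = \prod_{j,m} x^{2^j k_{j,m}}$, the generating function splits as
\[ G_r(x) = \prod_{j=0}^r \left( \sum_{k=0}^\infty c_k^{(j)}\, x^{2^j k} \right)^{s_{2^j}}. \]
It therefore suffices to prove, for each fixed $j$ and with $q = x^{2^j}$, the identity $G^{(j)}(q) := \sum_{k\ge 0} c_k^{(j)}\, q^k = (1-q^2)^{-1/2}\exp\!\left(-2^j + \tfrac{2^j}{1-q}\right)$ of formal power series in $q$.

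To get at $G^{(j)}$ I would first record the recurrence for the $b_k^{(j)}$: a short computation gives $\sum_k b_k^{(j)} t^k = \exp\!\left(t + 2^{-(j+1)} t^2\right)$, and differentiating yields $(k+1)\, b_{k+1}^{(j)} = b_k^{(j)} + 2^{-j}\, b_{k-1}^{(j)}$ for $k\ge 0$ (with $b_{-1}^{(j)} = 0$). The key move is to introduce the auxiliary sequence $d_k^{(j)} := 2^{jk}\, k!\, b_k^{(j)} b_{k-1}^{(j)}$, with $d_0^{(j)} = 0$. Squaring the $b$-recurrence and, separately, multiplying it by $b_k^{(j)}$, then clearing factorials, gives the coupled system
\[ (k+1)\, c_{k+1}^{(j)} = 2^j\, c_k^{(j)} + 2\, d_k^{(j)} + k\, c_{k-1}^{(j)}, \qquad d_{k+1}^{(j)} = 2^j\, c_k^{(j)} + d_k^{(j)} \qquad (k\ge 0). \]
Subtracting the first relation at $k$ from the first relation at $k+1$, and using the second relation to rewrite $d_{k+1}^{(j)} - d_k^{(j)} = 2^j c_k^{(j)}$, eliminates $d^{(j)}$ and leaves the three-term recurrence
\[ (k+1)\, c_{k+1}^{(j)} = \left(k + 2^j\right)\left(c_k^{(j)} + c_{k-1}^{(j)}\right) - (k-1)\, c_{k-2}^{(j)} \qquad (k\ge 0), \]
with the conventions $c_{-1}^{(j)} = c_{-2}^{(j)} = 0$ and $c_0^{(j)} = 1$.

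Multiplying this recurrence by $q^k$ and summing turns it into the differential equation
\[ \left(1 - q - q^2 + q^3\right) \left(G^{(j)}\right)'(q) = \left(2^j + (1+2^j)\, q - q^2\right) G^{(j)}(q), \qquad G^{(j)}(0) = 1. \]
On the other side, a direct logarithmic-derivative computation — using $-2^j + 2^j/(1-q) = 2^j q/(1-q)$, whose derivative is $2^j/(1-q)^2$, and $\tfrac{d}{dq}\!\left(-\tfrac12\log(1-q^2)\right) = q/(1-q^2)$ — shows that $R(q) := (1-q^2)^{-1/2}\exp\!\left(2^j q/(1-q)\right)$ satisfies the same equation with $R(0) = 1$. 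Since $1 - q - q^2 + q^3$ is a unit in $\CC[[q]]$, this equation has a unique power series solution for the given initial value, so $G^{(j)} = R$. Raising to the power $s_{2^j}$, substituting $q = x^{2^j}$, and multiplying over $j$ produces the claimed formula for $G_r(x)$.

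I expect the middle paragraph to carry the real weight. The squared sequence $\left(b_k^{(j)}\right)^2$ obeys no tidy linear recurrence on its own; iterating the recurrence for $b_k^{(j)}$ unavoidably brings in the cross term $b_k^{(j)} b_{k-1}^{(j)}$, and the small miracle is that renormalizing \emph{both} $\left(b_k^{(j)}\right)^2$ and $b_k^{(j)} b_{k-1}^{(j)}$ by the factor $2^{jk} k!$ yields a linear system in $\left(c^{(j)}, d^{(j)}\right)$ that closes and whose elimination has $k$-dependence only through $k+2^j$ and $k-1$. That is exactly what makes the resulting ODE — and hence the clean product formula — fall out. By contrast, the initial factorization of $G_r$ and the final passage from the three-term recurrence to the ODE and its uniqueness are routine.
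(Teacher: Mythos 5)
Your argument is correct and takes essentially the paper's route: factor $G_r(x)$ into per-$j$ pieces via Proposition~\ref{prp_closedform}, recognize $\exp\!\bigl(t+2^{-(j+1)}t^2\bigr)$ as the EGF of the $b_k^{(j)}$ to get $(k+1)b_{k+1}^{(j)}=b_k^{(j)}+2^{-j}b_{k-1}^{(j)}$, convert the induced recurrence on the weighted squares into a first-order linear ODE, and solve it subject to $G^{(j)}(0)=1$. The one small difference is in disposing of the cross term $k!\,b_k^{(j)}b_{k-1}^{(j)}$: the paper iterates the $b$-recurrence on it to produce the explicit convolution $\sum_{l\ge 2}2^{-(l-2)j}\,b_{j,k-l}^{(2)}$, whereas you package it as an auxiliary sequence $d_k^{(j)}$, close a coupled first-order linear system in $(c^{(j)},d^{(j)})$, and eliminate $d^{(j)}$ to get a three-term recurrence — a slightly tidier elimination that, after the paper's substitution $y=2^jq$, yields the identical ODE.
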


\begin{proof} Proposition \ref{prp_closedform} tells us that
\begin{multline*}
G_r(x)  = 
 \sum_{i_0,\ldots,i_r = 0}^\infty x^{\sum_j i_j 2^j}  \sum_{\substack{k_{j,1},\ldots, k_{j,s_{2^j}},\\ \sum_m k_{j,m} = i_j } }  \prod_{j=0}^r 2^{j\;i_j}  \prod_{m=1}^{s_{2^j}} k_{j,m} ! \\
 \cdot \left(\sum_{l_{j,m}=0}^{\floor{k_{j,m}/2}} \frac{1}{(k_{j,m}-2l_{j,m})! \cdot l_{j,m}!} 2^{ - (j+1)\cdot l_{j,m}}\right)^2 \\
  =  
  \prod_{j=0}^{r} \sum_{i_j=0}^\infty \left(2^j x^{2^j}\right)^{i_j} \sum_{\substack{k_{j,1},\ldots, k_{j,s_{2^j}},\\ \sum_m k_{j,m} = i_j } }    \prod_{m=1}^{s_{2^j}} k_{j,m} ! \left(\sum_{l_{j,m}=0}^{\floor{k_{j,m}/2}} \frac{1}{(k_{j,m}-2l_{j,m})! \cdot l_{j,m}!} 2^{ - (j+1)\cdot l_{j,m}}\right)^2  \\
   =  
  \prod_{j=0}^{r}   \sum_{k_{j,1},\ldots, k_{j,s_{2^j}} =0 }^\infty   \left(2^j x^{2^j}\right)^{\sum_m k_{j,m}} \prod_{m=1}^{s_{2^j}} k_{j,m} !\left(\sum_{l_{j,m}=0}^{\floor{k_{j,m}/2}} \frac{1}{(k_{j,m}-2l_{j,m})! \cdot l_{j,m}!} 2^{ - (j+1)\cdot l_{j,m}}\right)^2 \\
    =  
   \prod_{j=0}^{r}  \left(\sum_{k_j=0}^\infty \left(2^j x^{2^j}\right)^{k_j} k_j ! \; \left(\sum_{l_j=0}^{\floor{k_j/2}} \frac{1}{(k_j-2l_j)! \cdot l_j!} 2^{ - (j+1)\cdot l_j}\right)^2 \right)^{s_{2^j}} 
\end{multline*}
which leads us to define two sequences 
\[b_{j,k}:=\sum_{l=0}^{\floor{k/2}} \frac{2^{-(j+1)\;l}}{(k-2l)!\;l!} \;\; \text{and}\;\; b_{j,k}^{(2)}:= k!\;b_{j,k}^2 \]
for all $i,j\in\NN$ and the corresponding generating functions
\[F_j(x) = \sum_{k=0}^\infty b_{j,k}\; x^k \;\; \text{and} \;\; F_j^{(2)}(x) = \sum_{k=0}^\infty b_{j,k}^{(2)}\; x^k\]
so that
\[G_r(x) = \prod_{j=0} \left(F_j^{(2)}(2^j x^{2^j})\right)^{s_{2^j}} . \]

We will now prove the theorem by first determining $F_j(x)$, which leads to a recurrence for the sequence $(b_{j,k})_k$. From that, we will derive a recurrrence for the sequence $(b_{j,k}^{(2)})_i$, which in turn leads to an ODE for $F_j^{(2)}(x)$. The solution to this ODE then gives us $F_j^{(2)}(x)$ and hence $G_r(x)$.

We have 
\[F_j(x) = \sum_{k=0}^\infty  \sum_{l = 0}^{\floor{k/2}} \frac{2^{-(j+1)l}}{(k-2l)!\; l!} x^k =  \sum_{l=0}^\infty  \sum_{k = 2l}^{\infty} \frac{2^{-(j+1)l}}{(k-2l)!\; k!} x^k .\]
Changing the index in the innermost sum, we obtain
\[F_j(x) =  \sum_{l=0}^\infty  \sum_{k = 0}^{\infty} \frac{2^{-(j+1)l}}{k!\; l!} x^{k+2l}  = \exp(x + 2^{-(j+1)} x^2).\]
Using that $d F_j(x)/dx= (1+2^{-j}\;x) F_j(x)$ and equating coefficients, we get the recurrence
\[ (k+1)\; b_{j,k+1} = b_{j,k} + 2^{-j}\; b_{j,k-1}.\]
for all $k\geq 0$.

The recurrence for the sequence $(b_{j,k})_k$ implies that
\begin{multline*}
b_{j,k}^{(2)} = \frac{(k-1)!}{k}\;(b_{j,k-1}+2^{-j}\;b_{j,k-2})^2 \\
 = \frac{1}{k}\;b_{j,k-1}^{(2)} + \frac{2^{-2j} (k-1)}{k}\; b_{j,k-2}^{(2)} + \frac{2^{-j+1}}{k}\;(k-1)!\; b_{j,k-1}\;b_{j,k-2}.  
\end{multline*}
To get the recurrence we are after, we need to compute the cross terms in the above. Again using the recurrence for the sequence $(b_{j,k})_k$, we obtain
\[ (k-1)!\;b_{j,k-1}\;b_{j,k-2} = b_{j,k-2}^{(2)} + 2^{-j}\; (k-2)!\; b_{j,k-2}\;b_{j,k-3}. \]
Hence
\[ (k-1)!\;b_{j,k-1}\;b_{j,k-2} = \sum_{l=2}^k 2^{-(l-2)j}\; b_{j,k-l}^{(2)} \]
and
\[b_{j,k}^{(2)} = \frac{1}{k}\;b_{j,k-1}^{(2)} + 2^{-2j}\frac{k-1}{k}\; b_{j,k-2}^{(2)} + 2^{-j+1}\frac{1}{k}\;\sum_{l=2}^k 2^{-j\;(l-2)}\; b_{j,k-l}^{(2)},  \]
or equivalently
\[k\; \left(b_{j,k}^{(2)} - 2^{-2j}\; b_{j,k-2}^{(2)} \right) = b_{j,k-1}^{(2)} - 2^{-2j} \; b_{j,k-2}^{(2)} + 2^{j+1}\;\sum_{l=2}^k 2^{-j\;l}\; b_{j,k-l}^{(2)}.\]

Using the recurrence, we obtain
\begin{eqnarray*}
\frac{d}{dx} \left(F_j^{(2)}(x) - 2^{-2j}\; x^2\; F_j^{(2)}(x) \right) & = & \sum_{k=0}^\infty k \left(b_{j,k}^{(2)} - 2^{-2j}\; b_{j,k-2}^{(2)} \right) \; x^{k-1} \\
& = & \sum_{k=0}^\infty \left(b_{j,k-1}^{(2)} - 2^{-2j} \; b_{j,k-2}^{(2)} + 2^{j+1}\;\sum_{l=2}^k 2^{-j\;l}\; b_{j,k-l}^{(2)}\right)\;x^{k-1}.
\end{eqnarray*}
We have 
\[ \sum_{k=0}^\infty (b_{j,k-1}^{(2)} - 2^{-2j}  b_{j,k-2}^{(2)})\; x^{k-1} = (1-2^{-2j} x)\;F_j^{(2)}(x) \]
and
\[ \sum_{k=0}^\infty \sum_{l=2}^k 2^{-j\;l}\; b_{j,k-l}^{(2)} \; x^{k-1} = \sum_{l=2}^\infty \sum_{k=0}^\infty 2^{-j\;l}\; b_{j,k}^{(2)} \; x^{k+l-1} = \frac{2^{-2j}\;x}{1-2^{-j}\;x}\; F_j^{(2)}(x). \]
Because
\[ \frac{d}{dx} \left(F_{j}^{(2)}(x) - 2^{-2j}\; x^2\; F_{j}^{(2)}(x) \right) = (1-2^{-2j}\; x^2) \; \frac{d}{dx}F_j^{(2)}(x) - 2^{-2j+1}\;x\; F_j^{(2)}(x),\]
our generating function satisfies the following ODE:
\[(1-2^{-2j}\; x^2) \; \frac{d}{dx}F_j^{(2)}(x) = (1+2^{-2j}\;x)\; F_j^{(2)}(x) + 2^{-j+1}\frac{x}{1-2^{-j}\;x}\; F_j^{(2)}(x), \]
which is equivalent to
\[ \frac{d}{dx}F_j^{(2)}(x) = \frac{1 + (2^{-2j} + 2^{-j})\;x - 2^{-3j}\; x^2}{(1-2^{-j}\;x)^2\;(1+2^{-j}\;x)}\; F_j^{(2)}(x),\]
which leads to 
%
%
%
\[ F_j^{(2)}(x) = a \; \frac{1}{\sqrt{1-2^{-2j}  x^2 }} \exp\left(\frac{-1}{2^{-2j}\;x-2^{-j}}\right), \]
where $a\in\CC$ is some constant. Filling this in for $G_r(x)$ leads to
\[G_r(x) = a' \prod_{j=0}^r \left( \left(1- x^{2^{j+1}}\right)^{-s_{2^j}/2} \exp\left(\frac{2^j\; s_{2^j}}{1-x^{2^j}}\right)\right)\]
for some constant $a'\in\CC$. Equating the constant coefficient gives 
\[a'=\exp\left(-\sum_{j=0}^r 2^j s_{2^j} \right) .\]
\end{proof}

\section{Immediate consequences}

Before we turn to the asymptotic behavior of the sequence $(h_n(\cox(\mathcal{A}_r)))_n$, we derive two immediate consequences to Theorem \ref{thm_genfn}: a closed formula for the number index $n$ subgroups of $\cox(\mathcal{A}_r)$ and a recurrence for the sequence $(h_n(\cox(\mathcal{A}_r)))_n$.

\subsection{Subgroups of virtually cyclic Coxeter groups}

Theorem \ref{thm_genfn} allows us to derive a closed form  for the number of subgroups of a virtually cyclic Coxeter group. We have:

\begin{cor}\label{cor_subgroupcount} Let $r \in \NN$ and let
\[\Gamma = \cox(\mathcal{A}_r).\]
Then
\[s_n(\Gamma) = n\; \left(1+ \sum_{\substack{ 0< j \leq r \\ s.t. \;  2^j | n}} 2^j\; s_{2^j} \right) + \sum_{\substack{ 0\leq  j \leq r \\ s.t. \;  2^{j+1} | n}} 2^j\; s_{2^j},\]
for all $n\in \NN$.
\end{cor}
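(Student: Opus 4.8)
The plan is to extract the subgroup count $s_n(\Gamma)$ from the exponential generating function $G_r(x)$ obtained in Theorem \ref{thm_genfn}, using the relationship between $F_\Gamma(x)$ and the Dirichlet-like series of subgroup counts. Concretely, by Lemma \ref{lem_expprinc} we have $F_\Gamma(x) = \exp\left(\sum_{i\geq 1} \frac{s_i(\Gamma)}{i}x^i\right)$, and since $F_\Gamma = G_r$, taking logarithms gives
\[
\sum_{i=1}^\infty \frac{s_i(\Gamma)}{i}\;x^i = \log G_r(x) = \sum_{j=0}^r \left( -\frac{s_{2^j}}{2}\log\left(1-x^{2^{j+1}}\right) - 2^j s_{2^j} + \frac{2^j s_{2^j}}{1-x^{2^j}}\right).
\]
So the whole problem reduces to expanding the right-hand side as a power series and reading off the coefficient of $x^n$.

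First I would expand each of the three types of terms. The term $-\frac{s_{2^j}}{2}\log(1-x^{2^{j+1}})$ expands as $\frac{s_{2^j}}{2}\sum_{k\geq 1}\frac{x^{2^{j+1}k}}{k}$, contributing $\frac{s_{2^j}}{2}\cdot\frac{1}{n/2^{j+1}} = \frac{2^{j+1}s_{2^j}}{2n} = \frac{2^j s_{2^j}}{n}$ to the coefficient of $x^n$ whenever $2^{j+1}\mid n$ (and nothing otherwise); the constant $-2^j s_{2^j}$ contributes only to the constant term; and $\frac{2^j s_{2^j}}{1-x^{2^j}} = 2^j s_{2^j}\sum_{k\geq 0}x^{2^j k}$ contributes $2^j s_{2^j}$ to the coefficient of $x^n$ whenever $2^j\mid n$. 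Summing these over $j=0,\ldots,r$, the coefficient of $x^n$ in $\log G_r(x)$, which equals $s_n(\Gamma)/n$, is
\[
\frac{s_n(\Gamma)}{n} = \sum_{\substack{0\leq j\leq r\\ 2^j\mid n}} 2^j s_{2^j} \;+\; \sum_{\substack{0\leq j\leq r\\ 2^{j+1}\mid n}} \frac{2^j s_{2^j}}{n}.
\]
Multiplying through by $n$ and separating the $j=0$ term of the first sum (which is $2^0 s_{2^0} = s_1((\ZZ/2\ZZ)^r) = 1$, always present since $2^0\mid n$) gives exactly the claimed formula.

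The main obstacle — really the only place care is needed — is bookkeeping the divisibility conditions and the index shift cleanly: making sure the coefficient of $x^n$ in $\frac{x^{2^{j+1}k}}{k}$ picks out $k = n/2^{j+1}$ correctly, that the $j=0$ contribution to the first sum is pulled out to produce the leading ``$n\cdot 1$'' and the ``$1+$'' inside the parenthesis, and that no double-counting occurs between the two sums. One should also double-check that $s_{2^0}((\ZZ/2\ZZ)^r)=1$ under the stated formula (the empty products give $1/1$), which is what makes the constant term $1$ appear. Everything else is a routine term-by-term comparison of power series coefficients, so no convergence subtleties arise beyond formal manipulation of the logarithm, which is justified since $G_r(0)=1$ and $G_r$ is analytic near $0$.
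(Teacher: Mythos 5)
Your proposal is correct and follows essentially the same route as the paper: apply Lemma~\ref{lem_expprinc} to identify $\log G_r(x)$ with $\sum_{i\geq 1} \frac{s_i(\Gamma)}{i}x^i$, expand each factor from Theorem~\ref{thm_genfn} as a power series (noting that the additive constant $-2^j s_{2^j}$ cancels the $k=0$ term of the geometric series), and read off the coefficient of $x^n$. The bookkeeping you flag — pulling out the always-present $j=0$ term $2^0 s_{2^0}=1$ to produce the ``$1+$'' — is exactly the step the paper performs, so no further comment is needed.
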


\begin{proof}
Lemma \ref{lem_expprinc} implies that
\[\sum_{n = 1}^\infty \frac{s_n(\Gamma)}{n}\; x^n = \log\left(\sum_{n=0}^\infty \frac{h_n(\Gamma)}{n!}\;x^n \right)\]
Using Theorem \ref{thm_genfn}, this means that
\[\sum_{n = 1}^\infty \frac{s_n(\Gamma)}{n}\; x^n =  \sum_{j=0}^r s_{2^j}\cdot \left(- 2^j-\frac{1}{2}\log\left(1- x^{2^{j+1}}\right) + \frac{2^j}{1- x^{2^j}} \right).\]
Using the Taylor expansions for $-\log(1-x)$ and $1/(1-x)$ at $x=0$, we obtain
\[\sum_{n = 1}^\infty \frac{s_n(\Gamma)}{n}\; x^n =  \sum_{j=0}^r s_{2^j} \sum_{k=1}^\infty \frac{1}{2}\frac{1}{k} x^{k\;2^{j+1}} + 2^j\; x^{k\;2^j}.\]
This means that
\[s_n(\Gamma) = n\; \left(1+\frac{1}{2}\sum_{\substack{ 0\leq  j \leq r \\ s.t. \;  2^{j+1} | n}} \frac{2^{j+1}}{n}\;s_{2^j}+ \sum_{\substack{ 0< j \leq r \\ s.t. \;  2^j | n}} 2^j\; s_{2^j} \right),\]
which gives the corollary.
\end{proof}

\subsection{A recurrence}

As a corollary to Theorem \ref{thm_genfn}, or rather its proof, we obtain the following recurrence for the sequence $\left(h_n(\mathcal{A}_r)\right)_n$:
\begin{cor}\label{cor_recurrence} Let $r\in\NN$ and write
\[h_{n,r} = h_n(\cox(\mathcal{A}_r)).\]
Then
\begin{eqnarray*}
 \frac{h_{n+1,r}}{n!} & = & - \sum_{\substack{\varepsilon \in I_r \setminus \{0\} \\ \abs{\varepsilon} \leq n}} \cdot \frac{h_{n-\abs{\varepsilon}+1,r}}{(n-\abs{\varepsilon})!}\; \prod_{j=0}^r (-1)^{\varepsilon_{j,1}+\varepsilon_{j,2}} 
 \\
 & & + \sum_{j=0}^r \sum_{\substack{\varepsilon \in I_r \\ \varepsilon_{j,1}=\varepsilon_{j,2} \\ =\varepsilon_{j,3}=0}} \frac{h_{n+1-2^j-\abs{\varepsilon},r}}{(n+1-2^j-\abs{\varepsilon})!} \cdot s_{2^j}\cdot 2^{2j} \; \prod_{k\neq j} (-1)^{\varepsilon_{k,1}+\varepsilon_{k,2}} 
 \\
  & & + \sum_{j=0}^r \sum_{\substack{\varepsilon \in I_r \\ \varepsilon_{j,1}=\varepsilon_{j,2}\\ =\varepsilon_{j,3}=0}}  \frac{h_{n+1-2^{j+1}-\abs{\varepsilon},r}}{(n+1-2^{j+1}-\abs{\varepsilon})!} \cdot s_{2^j}\cdot (2^j+2^{2j})\;  \prod_{k\neq j} (-1)^{\varepsilon_{k,1}+\varepsilon_{k,2}}
   \\
  & & - \sum_{j=0}^r  \sum_{\substack{\varepsilon \in I_r \\ \varepsilon_{j,1}=\varepsilon_{j,2}\\ =\varepsilon_{j,3}=0}}  \frac{h_{n+1-3\cdot 2^j-\abs{\varepsilon},r}}{(n+1-3\cdot 2^j-\abs{\varepsilon})!} \cdot s_{2^j}\cdot 2^j \; \prod_{k\neq j} (-1)^{\varepsilon_{k,1}+\varepsilon_{k,2}}
\end{eqnarray*}
for all $n\geq 0$, with the initial conditions
\[h_{0,r} = 1\;\; \text{and} \;\;h_{m,r} = 0 \;\text{for all}\; m<0,\]
where $I_r = \left(\{0,1\}^{r+1}\right)^3$, $\abs{\varepsilon} = \sum_{j=0}^r (\varepsilon_{j,1}+\varepsilon_{j,2}+\varepsilon_{j,3})\;2^j$ for all $\varepsilon\in I_r$ and $s_{2^j} = s_{2^j}((\ZZ/2\ZZ)^r)$.
\end{cor}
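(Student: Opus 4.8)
The plan is to extract the recurrence directly from the ODE established in the proof of Theorem~\ref{thm_genfn}, rather than from the generating function in its closed form. Recall that in that proof we obtained, for each $j=0,\ldots,r$, that $F_j^{(2)}(x)$ satisfies
\[(1-2^{-2j}\,x^2)\,\frac{d}{dx}F_j^{(2)}(x) = (1+2^{-2j}\,x)\,F_j^{(2)}(x) + \frac{2^{-j+1}\,x}{1-2^{-j}\,x}\,F_j^{(2)}(x),\]
and that $G_r(x) = \prod_{j=0}^r \bigl(F_j^{(2)}(2^j x^{2^j})\bigr)^{s_{2^j}}$. The first step is to turn this product-of-powers description into a single first-order ODE for $G_r(x)$ by logarithmic differentiation: $G_r'(x)/G_r(x) = \sum_{j=0}^r s_{2^j}\cdot\frac{d}{dx}\log F_j^{(2)}(2^j x^{2^j})$, and then substitute the ODE for $F_j^{(2)}$ (with $x$ replaced by $2^j x^{2^j}$, so $\frac{d}{dx}$ picks up a factor $2^j\cdot 2^j x^{2^j-1} = 2^{2j}x^{2^j-1}$) to express each summand as a rational function of $x$ times $G_r$. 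After clearing all denominators — which are $(1-x^{2^{j+1}})$ and $(1-x^{2^j})$ across $j=0,\ldots,r$ — one arrives at a relation of the shape $P(x)\,G_r'(x) = Q(x)\,G_r(x)$ where $P$ and $Q$ are explicit polynomials whose coefficients are built from $s_{2^j}$ and powers of $2$.

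The second step is bookkeeping: identify $P(x)$ and $Q(x)$ precisely. From the structure of the ODE for $F_j^{(2)}$, the common denominator contributes a factor $(1-x^{2^{j+1}})(1-x^{2^j})^2 = (1-x^{2^j})^3(1+x^{2^j})$ for each $j$; multiplying over all $j$ and expanding $(1-x^{2^j})^3(1+x^{2^j})$ explains the appearance of the index set $I_r = (\{0,1\}^{r+1})^3$ and the exponents $\abs{\varepsilon}$ — each triple $\varepsilon$ records which of the three $(1-x^{2^j})$ factors and the single ``extra'' contributes a $-x^{2^j}$ versus a $1$, with the sign $\prod_j(-1)^{\varepsilon_{j,1}+\varepsilon_{j,2}}$ coming from the two cubed factors that carry minus signs (the third slot $\varepsilon_{j,3}$ being tied to the $(1+x^{2^j})$ factor or to the numerator terms, which explains the constraint $\varepsilon_{j,1}=\varepsilon_{j,2}=\varepsilon_{j,3}=0$ appearing in the last three sums). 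The four groups of terms in the statement should match the four terms on the right-hand side of the cleared ODE: the main term $P(x)G_r'(x)$, and the three pieces of $Q(x)$ coming from $(1+2^{-2j}x)$, from the constant/linear part, and from the $2^{-j+1}x/(1-2^{-j}x)$ term, each localized at index $j$ and hence multiplied by $s_{2^j}$ and the appropriate power of $2$ (note $2^j\mapsto 2^{2j}$ under the substitution $x\mapsto 2^jx^{2^j}$, matching the $2^{2j}$, $2^j+2^{2j}$, $2^j$ coefficients).

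The third step is routine: extract the coefficient of $x^n$ from $P(x)G_r'(x) = Q(x)G_r(x)$. Writing $G_r(x) = \sum h_{n,r}x^n/n!$, the coefficient of $x^n$ in $G_r'(x)$ is $(n+1)h_{n+1,r}/(n+1)! = h_{n+1,r}/n!$, which produces the left-hand side; multiplying by the polynomial $P$ shifts indices by $\abs{\varepsilon}$, and similarly on the right; solving for the $h_{n+1,r}/n!$ term (the $\varepsilon=0$ contribution on the left, which has coefficient $1$ since $P(0)=1$) and moving everything else over yields exactly the claimed recurrence. The initial conditions $h_{0,r}=1$ and $h_{m,r}=0$ for $m<0$ are immediate from $G_r(0)=1$ and the convention on negative-index coefficients.

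The main obstacle I anticipate is purely combinatorial accounting: matching the signs and the precise set of index constraints (why $\varepsilon_{j,3}$ is forced to $0$ in three of the four sums, and why only $\varepsilon_{k,1}+\varepsilon_{k,2}$ appears in the product of signs there) requires carefully tracking which of the factors $(1-x^{2^j})^3$, $(1+x^{2^j})$, and $(1-x^{2^j})$ in the denominators survives into $P$ versus gets cancelled against a denominator in $Q$ at the distinguished index $j$. Getting this cancellation pattern exactly right — so that, for instance, at index $j$ the factor $(1-x^{2^{j+1}})=(1-x^{2^j})(1+x^{2^j})$ in the numerator of the $F_j^{(2)}$-ODE cancels part of the common denominator, leaving a reduced polynomial with a three-slot expansion over the \emph{other} indices $k\neq j$ — is the delicate part, but it is finite bookkeeping rather than a genuine difficulty.
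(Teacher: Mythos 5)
Your approach is exactly the one the paper takes: use the ODE $(1-2^{-2j}x^2)\,\partial_x F_j^{(2)} = \bigl[(1+2^{-2j}x) + 2^{-j+1}x/(1-2^{-j}x)\bigr]F_j^{(2)}$ together with $G_r = \prod_j\bigl(F_j^{(2)}(2^jx^{2^j})\bigr)^{s_{2^j}}$ via logarithmic differentiation, clear denominators, and compare coefficients of $x^n$. Your identification of the three monomials $2^{2j}x^{2^j-1}, (2^j+2^{2j})x^{2^{j+1}-1}, -2^jx^{3\cdot 2^j-1}$ in $Q(x)$, the role of $s_{2^j}$, the $\varepsilon_{j,\cdot}=0$ constraint coming from the missing $j$-th factor in $\prod_{k\neq j}$, and the use of $P(0)=1$ to isolate $h_{n+1,r}/n!$ are all correct and line up with the paper's computation.

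There is, however, a concrete slip in your identification of $P(x)$ that would derail the expansion. After substituting $x\mapsto 2^jx^{2^j}$, the denominator of the $j$-th term of $G_r'/G_r$ is $(1-x^{2^j})^2(1+x^{2^j})$, which equals $(1-x^{2^j})(1-x^{2^{j+1}})$ --- \emph{one} factor of each, not two of $(1-x^{2^j})$. You instead write the per-$j$ contribution as $(1-x^{2^{j+1}})(1-x^{2^j})^2 = (1-x^{2^j})^3(1+x^{2^j})$, which has four binary factors per $j$ and therefore cannot be indexed by a triple $\varepsilon\in(\{0,1\}^{r+1})^3$; your own phrase ``which of the three $(1-x^{2^j})$ factors and the single extra'' describes four slots, contradicting the statement's $I_r$. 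The correct $P(x) = \prod_{j}(1-x^{2^j})^2(1+x^{2^j})$ has exactly three binary factors per $j$: the two $(1-x^{2^j})$'s give the slots $\varepsilon_{j,1},\varepsilon_{j,2}$ (each carrying a sign $-1$), and the single $(1+x^{2^j})$ gives $\varepsilon_{j,3}$ (sign $+1$), which is why the sign is $\prod_j(-1)^{\varepsilon_{j,1}+\varepsilon_{j,2}}$ with $\varepsilon_{j,3}$ absent. With this correction your coefficient extraction goes through verbatim and reproduces the stated recurrence.
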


\begin{proof} Recall the differential equation for the functions $F_j^{(2)}$ defined in the proof of Theorem \ref{thm_genfn}:
\[ \frac{d}{dx}F_j^{(2)}(x) = \frac{1 + (2^{-2j} + 2^{-j})\;x - 2^{-3j}\; x^2}{(1-2^{-j}\;x)^2\;(1+2^{-j}\;x)}\; F_j^{(2)}(x),\]
for $j=0,\ldots,r$. Using the fact that
\[G_r(x) = \prod_{j=0} \left(F_j^{(2)}(2^j x^{2^j})\right)^{s_{2^j}} . \]
we obtain
\[\frac{dG_r(x)}{dx} = \sum_{j=0}^r s_{2^j}\;2^{2j}\;x^{2^j-1}\;\frac{1 + (2^{-j} + 1)\;x^{2^j} - \;2^{-j}\; x^{2^{j+1}}}{(1-x^{2^j})^2\;(1+x^{2^j})} G_r(x), \]
or equivalently
\begin{eqnarray*}
\frac{dG_r(x)}{dx}\prod_{j=0}^r (1-x^{2^j})^2\;(1+x^{2^j})  &  = &
 \sum_{j=0}^r \bigg\{\Big(2^{2j} x^{2^j-1} + (2^j + 2^{2j})\;x^{2^{j+1}-1} - 2^j\; x^{3\cdot 2^j-1}\Big) \\
& &   
 \cdot s_{2^j} \cdot \prod_{k\neq j} (1-x^{2^k})^2\;(1+x^{2^k})\bigg\}\cdot G_r(x) .
\end{eqnarray*}
So,
\begin{eqnarray*}
\sum_{n=0}^\infty \frac{h_{n+1,r}\; x^n}{n!} \prod_{j=0}^r (1-x^{2^j})^2\;(1+x^{2^j}) & = &  
 \sum_{j=0}^r \bigg\{\sum_{n=0}^\infty \frac{h_{n,r}\;x^n}{n!}  \cdot s_{2^j} \cdot \Big( 2^{2j} x^{2^j-1} - 2^j\; x^{3\cdot 2^j-1}\\
 & & + (2^j + 2^{2j})\;x^{2^{j+1}-1} \Big) \cdot \prod_{k\neq j} (1-x^{2^k})^2\;(1+x^{2^k})\bigg\}.
\end{eqnarray*}
Equating the coefficients of $x^n$ now gives the corollary.
\end{proof}

\section{Asymptotics} The goal of this section is to prove Theorems \ref{thm_permrep} and \ref{thm_subgrps}: the asymptotes for the number of permutation representations and the number of subgroups of a free product of virtually cyclic right-angled Coxeter groups. The largest part of the section is taken up by the proof of Theorem \ref{thm_asymp}, which gives an asymptotic expression for the coefficients in functions of the form of $G_r(x)$.

\subsection{The asymptotics of coefficients of power series}\label{sec_genfns}

There are many methods available to determine the asymptotics of the coefficients of a given power series $F(z)$. We will use Hayman's techniques from \cite{Hay}. Hayman's results hold for functions that are by now called H-admissible functions. In the following definition we will write
\[\DD_R = \st{z\in\CC}{\abs{z}<R},\]
for $R>0$.
\begin{dff}\label{dff_hadmissible} Let $R>0$ and let $F:\DD_R\to \CC$. Define $A,B:[0,R)\to \RR$ by
\begin{equation}\label{eq_functionsH}
A(\rho) = \rho\;\frac{1}{F(\rho)}\; \frac{d}{d\rho} F(\rho) \;\;\text{and} \;\; B(\rho)= \rho\;\frac{d}{d\rho} A(\rho),
\end{equation}
Then $F$ is called H-admissible if there exists a function $\delta;[0,R]\to (0,\pi)$ so that $F$ satisfies the following conditions
\begin{itemize}
\item[(H1)] As $\rho\to R$, 
\[F(\rho e^{i\theta}) \sim F(\rho) \; \exp\left(i\theta A(\rho) - \frac{1}{2}\theta^2\;B(\rho)\right),\] 
uniformly for $\abs{\theta} \leq \delta(\rho)$ 
\item[(H2)] As $\rho\to R$, 
\[F(\rho e^{i\theta}) = o\left(\frac{F(\rho)}{\sqrt{B(\rho)}} \right), \]
uniformly for $\delta(\rho) \leq \abs{\theta} \leq \pi$ 
\item[(H3)] As $\rho\to R$, 
\[B(\rho) \to \infty.\]
\end{itemize}
\end{dff}

Hayman \cite[Theorem 1]{Hay} proved that
\begin{thm}\label{thm_Hayman} Let $R>0$ and let $F:\DD_R\to \CC$ be given by
\[ F(z) = \sum_{n=0}^\infty f_n\; z^n\]
and suppose $F$ is H-admissible. Then
\[f_n \rho^n =\frac{F(\rho)}{\sqrt{2\pi B(\rho)}}\exp\left(-\frac{1}{4}\frac{\left(A(\rho)-n\right)^2}{B(\rho)}\right)  \; \big(1+o(1)\big) \]
as $\rho\to R$ uniformly for all natural numbers $n\in\NN$.
\end{thm}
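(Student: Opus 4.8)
The plan is to run the classical saddle-point (circle) method on the Cauchy coefficient integral; the statement is exactly \cite[Theorem 1]{Hay}, so in practice one simply cites it, but I would argue as follows. Fix $\rho \in (0,R)$. Since $F$ is analytic on $\DD_R$, Cauchy's formula on the circle $\abs{z} = \rho$ gives
\[
f_n\,\rho^n \;=\; \frac{1}{2\pi}\int_{-\pi}^{\pi} F\!\left(\rho e^{i\theta}\right)\, e^{-in\theta}\, d\theta .
\]
The guiding picture is that, by (H1), near $\theta = 0$ the integrand behaves like $F(\rho)\exp\!\left(i\theta(A(\rho)-n) - \tfrac12\theta^2 B(\rho)\right)$, a Gaussian bump centred at $\theta = 0$ of width of order $B(\rho)^{-1/2}$, while by (H2) it is negligibly small away from $\theta = 0$. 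So the first step is to split $[-\pi,\pi]$ into the major arc $\abs{\theta}\le\delta(\rho)$ and the minor arc $\delta(\rho) < \abs{\theta}\le\pi$, with $\delta$ the function supplied by H-admissibility.

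On the minor arc, (H2) gives $\abs{F(\rho e^{i\theta})} = o\!\left(F(\rho)/\sqrt{B(\rho)}\right)$ uniformly, and since $\abs{e^{-in\theta}}=1$, integrating over a set of measure at most $2\pi$ shows this part contributes $o\!\left(F(\rho)/\sqrt{B(\rho)}\right)$ to $f_n\rho^n$, uniformly in $n$. On the major arc, I would insert the asymptotic (H1) to reduce the integral, up to a uniform factor $1 + o(1)$, to
\[
\frac{F(\rho)}{2\pi}\int_{-\delta(\rho)}^{\delta(\rho)} \exp\!\left(i\theta(A(\rho)-n) - \tfrac12\theta^2 B(\rho)\right)\, d\theta .
\]
Then I would extend the range of integration to all of $\RR$: comparing (H1) at $\theta = \delta(\rho)$ with (H2) forces $\delta(\rho)^2 B(\rho)\to\infty$ as $\rho\to R$ (one of the preliminary consequences of admissibility), so the discarded Gaussian tails are $o\!\left(B(\rho)^{-1/2}\right)$. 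A routine completion of the square in the exponent then evaluates the resulting complex Gaussian, producing $\sqrt{2\pi/B(\rho)}$ together with the exponential factor of the statement; adding back the (subdominant) minor-arc term yields the claimed formula.

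The step I expect to be the genuine obstacle is the uniformity in $n$: the pointwise asymptotic (H1) must be promoted to an asymptotic for the major-arc integral with error $o(1)$ relative to the \emph{true} order of magnitude of $f_n\rho^n$, and this is delicate precisely when $\abs{A(\rho)-n}$ is large (near the saddle $\rho_n$ given by $A(\rho_n)=n$ the exponential factor is of order one and the estimate is routine), since there the Gaussian main term is exponentially small and one must still rule out the minor-arc error $o\!\left(F(\rho)/\sqrt{B(\rho)}\right)$ dominating it. This is the sharpest part of Hayman's estimates; the rest is standard Laplace-method analysis. Before any of this I would first record the elementary properties of H-admissible functions that the argument relies on — that $A(\rho)\to\infty$, that $B(\rho)\to\infty$ (which is (H3)), that $\delta(\rho)^2 B(\rho)\to\infty$, and the second-order behaviour of $\log F(\rho e^{i\theta})$ for small $\theta$ — following Hayman's preliminary lemmas in \cite{Hay}.
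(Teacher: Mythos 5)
Your approach matches the paper's: this is Hayman's Theorem I, which the paper simply cites from \cite{Hay}, and your sketch is the standard saddle-point (major/minor arc) argument that Hayman carries out there. One caveat, which your own concern about uniformity in $n$ actually pinpoints: the major/minor arc decomposition yields an \emph{additive} error, and indeed Hayman's theorem as proved is
\[
f_n\,\rho^n \;=\; \frac{F(\rho)}{\sqrt{2\pi B(\rho)}}\left(\exp\!\left(-\frac{(A(\rho)-n)^2}{2B(\rho)}\right) + o(1)\right),
\]
with $+o(1)$ inside the bracket and a factor $\tfrac12$ in the exponent, not the multiplicative $(1+o(1))$ and the factor $\tfrac14$ written in the paper's statement (sanity check: for $F(z)=e^z$ this is the normal approximation to the Poisson distribution, with variance $r$). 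The paper's form is strictly stronger and is not what \cite{Hay} establishes, so your worry that the minor-arc error could swamp the Gaussian term when $\lvert A(\rho)-n\rvert$ is large is not an obstacle to be overcome but the reason the correct statement has an additive error; this discrepancy is harmless downstream because the paper only ever invokes the theorem at $\rho=\rho_n$ with $A(\rho_n)=n$, where both forms coincide.
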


Our strategy now consists of three steps. First, we use Hayman's results to get a uniform estimate on the coefficients of $F$ in terms of a radius $\rho$ and two functions $A(\rho)$ and $B(\rho)$  (Proposition \ref{prp_coeff}). The standard trick after this (that already appears in Hayman's paper) is to simplify the estimates by finding a sequence $(\rho_n)_n$ that solves the equation $A(\rho_n)=n$. In order to get good estimates on such a sequence, we first prove a lemma (Lemma \ref{lem_roots}) on the solutions to polynomial equations. This relies on Newton's method. Finally, we put our estimates together, which gives us the asymptotic we are after (Theorem \ref{thm_asymp}). 

In this section, we will just present the results. The proofs of these results, that are independent of the rest of the material, will be postponed to later sections.

Theorem \ref{thm_Hayman} leads to the following estimate:

\begin{prp}\label{prp_coeff} For $r\in\NN_{\geq 1}$, let $b_{1,j},b_{2,j} >0$ and $k_j\in\NN$ for $j=0,\ldots ,r$, so that $k_0=1$ and 
\[k_0 < k_j \;\;\text{for all}\; j\in \{1,2,3,\ldots,r\}.\] 
Moreover, let
\[F : \DD_1 \to \CC\]
be defined by
\[F(z) = \sum_{n\geq 0} f_n \; z^n = \prod_{j=0}^r \left(1-z^{2k_j}\right)^{-b_{1,j}}\exp\left( \frac{b_{2,j}}{1-z^{k_j}} \right). \]
Then
\[f_n \rho^n =\frac{F(\rho)}{\sqrt{2\pi B(\rho)}}\exp\left(-\frac{1}{4}\frac{\left(A(\rho)-n\right)^2}{B(\rho)}\right)  \; \big(1+o(1)\big) \]
as $\rho\to 1$ uniformly for all natural numbers $n\in\NN$, where
\[
A(\rho) = \sum_{j=0}^r b_{1,j}\;2k_j \;\frac{\rho^{2k_j} }{1-\rho^{2k_j} } + b_{2,j} \;k_j\; \frac{\rho^{k_j}}{(1-\rho^{k_j})^2}\]
and
\[
B(\rho) =  \sum_{j=0}^r 4\; b_{1,j}\; k_j^2\; \left(\frac{\rho^{2k_j} }{1-\rho^{2k_j} } + \left(\frac{\rho^{2k_j} }{1-\rho^{2k_j}}\right)^2 \right)   +\; b_{2,j} \;k_j^2\; \left( \frac{\rho^{k_j}}{(1-\rho^{k_j} )^2}  + 2\;\frac{\rho^{2k_j} }{(1- \rho^{k_j})^3}\right) .
\]
\end{prp}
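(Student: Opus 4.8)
The plan is to verify that the function $F$ is H-admissible and then apply Theorem~\ref{thm_Hayman}; the content of the proposition is then reduced to computing the functions $A(\rho)$ and $B(\rho)$ from Definition~\ref{dff_hadmissible} explicitly and recognising that they equal the stated expressions. The computation of $A$ and $B$ is routine: writing $\log F(z) = \sum_j\left(-b_{1,j}\log(1-z^{2k_j}) + b_{2,j}(1-z^{k_j})^{-1}\right)$, one has $A(\rho) = \rho\,(\log F)'(\rho)$ and $B(\rho) = \rho A'(\rho)$, and differentiating term by term gives exactly the formulas in the statement (one uses $\rho\frac{d}{d\rho}\frac{\rho^{k}}{1-\rho^{k}} = k\frac{\rho^k}{(1-\rho^k)^2}$ and similar identities). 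So the real work is establishing H-admissibility, i.e. conditions (H1), (H2), (H3).

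For (H3), note that each summand in $B(\rho)$ is positive and, since $k_0=1$, the term coming from $j=0$ already contains $b_{1,0}\cdot 4\cdot\bigl(\tfrac{\rho^2}{1-\rho^2}+(\tfrac{\rho^2}{1-\rho^2})^2\bigr)$ and $b_{2,0}\bigl(\tfrac{\rho}{(1-\rho)^2}+\tfrac{2\rho^2}{(1-\rho)^3}\bigr)$, which blows up like $(1-\rho)^{-3}$ as $\rho\to 1$; hence $B(\rho)\to\infty$. For (H1) and (H2) the natural route is to invoke the known closure properties of the class of H-admissible functions rather than to check the estimates by hand. Specifically, Hayman proved (see \cite{Hay}, and the expositions in \cite{Odl,FS}) that: if $f$ is H-admissible on $\DD_R$ and $p$ is a polynomial with positive leading coefficient and real coefficients such that $p(\rho)\to\infty$ and $\max_{|z|=\rho}|p(z)|$ is controlled appropriately, then $e^{p}$, $fg$ (for $g$ H-admissible or a suitable polynomial/analytic factor), and compositions/products behave well; in particular $\exp\bigl(c/(1-z^k)\bigr)$ is H-admissible for $c>0$, $(1-z^k)^{-b}$ times an H-admissible function is H-admissible, and a finite product of H-admissible functions is H-admissible. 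The function $F$ is built from precisely these operations, so the cleanest argument is to cite the relevant lemmas from Hayman's paper and assemble them.

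The main obstacle is therefore bookkeeping about which of Hayman's closure lemmas apply and verifying their hypotheses in our setting --- in particular that the ``dominant'' factor $\exp(b_{2,0}/(1-z))$ is H-admissible and that multiplying by the remaining factors $(1-z^{2k_j})^{-b_{1,j}}$ and $\exp(b_{2,j}/(1-z^{k_j}))$ for $j\ge 1$, whose singularities on $|z|=1$ are ``weaker'' (they contribute lower-order terms to $B(\rho)$, of order $(1-\rho)^{-1}$ or $(1-\rho)^{-3}$ but with the $k_j$-th roots of unity as additional singular points), preserves admissibility. One must check that near each singular point $z_0=e^{2\pi i a/k_j}\ne 1$ on the unit circle the contribution is dominated by the $j=0$ term, which is where the hypothesis $k_0=1<k_j$ is used: it guarantees $z=1$ is a singularity of every factor simultaneously and is the point where $|F|$ is genuinely largest, so that (H1) holds in a shrinking arc $|\theta|\le\delta(\rho)$ around $\theta=0$ and (H2) holds on the complementary arc. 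Once H-admissibility is in hand, Theorem~\ref{thm_Hayman} gives the displayed asymptotic for $f_n\rho^n$ verbatim, completing the proof.
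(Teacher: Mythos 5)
Your overall strategy---establish H-admissibility of $F$ and then read off the conclusion from Theorem~\ref{thm_Hayman}, with $A(\rho)$ and $B(\rho)$ obtained by differentiating $\log F$---is exactly the strategy of the paper, so the framing is right and the computation of $A$ and $B$ and the check of (H3) are fine. The gap is in the middle: you propose to dispose of (H1) and (H2) by citing closure properties of the class of H-admissible functions, and that route does not actually go through.

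The specific claim that $\exp\bigl(c/(1-z^k)\bigr)$ is H-admissible for $c>0$ is false when $k\ge 2$: its power series has nonzero coefficients only in degrees divisible by $k$, whereas an H-admissible function must have $f_n>0$ for all large $n$ (this follows at once from Theorem~\ref{thm_Hayman}). The same objection applies to $(1-z^{2k})^{-b}$. More fundamentally, $|\exp(b_{2,j}/(1-z^{k_j}))|$ attains its maximum on $|z|=\rho$ at the $k_j$ points $\rho\,\omega$ with $\omega^{k_j}=1$, so (H2) fails outright for that factor. Consequently $F$ is not a product of H-admissible functions, and Hayman's product theorem does not apply. Nor do the other closure results help: the extra factors are not polynomials, and their maximum modulus on $|z|=\rho$ grows like $\exp\bigl(c/(1-\rho)\bigr)$, which is far too large for the ``small perturbation'' lemmas that require the added factor to be $O\bigl(F(\rho)^{1-\eta}\bigr)$ on the whole circle. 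Your own remark at the end --- that one must check the $j=0$ factor dominates near each singular point $e^{2\pi i a/k_j}\neq 1$ --- is precisely the content of (H2), and is exactly the part that cannot be outsourced to a closure lemma; it has to be verified by hand. The paper does this directly: it expands $\log F(\rho e^{i\theta})-\log F(\rho)$ to second order in $\theta$ near $\theta=0$ to get (H1), chooses $\delta(\rho)=(1-\rho)^{3/2}\bigl(\log\tfrac{1}{1-\rho}\bigr)^{3/4}$, and then for (H2) splits the complementary arc into a neighbourhood of $\theta=\delta(\rho)$ (controlled by the same expansion) and the finitely many other singular directions $\theta_j=2\pi j/\mathrm{lcm}(k_0,\dots,k_r)$, where one checks that the $j=0$ term $-b_{2,0}/(1-\rho)$ in $\log F$ dominates all the bounded contributions from the other factors. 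That explicit estimate is the missing core of the proof.
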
 

The proof of this proposition, which essentially consists of proving that $F$ is H-admissible, can be found in Section \ref{sec_proof1}. 

In order to control the function $A(\rho)$, we will need the following lemma:
\begin{lem}\label{lem_roots} Let $r\in\NN$, $\alpha_j, \beta_j \in (0,\infty)$ and $k_j\in \NN_{\geq 1}$, for $j=0,\ldots,r$. Consider for $t\in (0,1)$ the equation in $y$ given by
\[t\;\sum_{j=0}^r \alpha_j \; \frac{y^{2k_j}}{1-y^{2k_j}} + \beta_j\; \frac{y^{k_j}}{(1-y^{k_j})^2} = 1.\]
This equation has a solution $y=y(t)$ that satisfies
\[y (t) = 1 - c_1 t^{1/2} + c_2 t +\bigO{t^\gamma} \]
as $t\to 0$ for some $\gamma \in \QQ_{>1}$, where the constant $c_1>0$ is given by
\[c_1 = \sqrt{\sum_{j=0}^r\frac{\beta_j}{k_j^2}}.\]
\end{lem}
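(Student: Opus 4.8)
The plan is to analyze the equation near $y=1$ by substituting $y = 1-u$ with $u\to 0^+$ and determining the leading-order behavior of $u$ as a function of $t$. First I would observe that for $y\in(0,1)$ each term on the left-hand side is increasing in $y$, the sum tends to $0$ as $y\to 0^+$ and to $+\infty$ as $y\to 1^-$, so for every $t\in(0,1)$ there is a unique solution $y(t)\in(0,1)$, and $y(t)\to 1$ as $t\to 0^+$ (since keeping $y$ bounded away from $1$ forces the left side to $0$). So the substitution is justified and the task is an asymptotic expansion.

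Next I would extract the dominant singular behavior. For a fixed exponent $k$, as $y=1-u\to 1$ we have $1-y^{k} = ku + O(u^2)$, hence $\dfrac{y^{k}}{(1-y^{k})^2} = \dfrac{1}{k^2 u^2}\bigl(1+O(u)\bigr)$, while $\dfrac{y^{2k}}{1-y^{2k}} = \dfrac{1}{2ku}\bigl(1+O(u)\bigr)$ is of lower order. Summing over $j$, the left-hand side equals $t\Bigl(\dfrac{1}{u^2}\sum_{j=0}^r \dfrac{\beta_j}{k_j^2} + O(u^{-1})\Bigr)$. Setting this equal to $1$ gives, to leading order, $u^2 = t\sum_j \beta_j/k_j^2$, i.e. $u \sim c_1 t^{1/2}$ with $c_1 = \sqrt{\sum_{j=0}^r \beta_j/k_j^2}$; this already yields $y(t) = 1 - c_1 t^{1/2} + o(t^{1/2})$. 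To get the next term I would set $u = c_1 t^{1/2} + v$ and expand: the correction coming from the $O(u^{-1})$ and from the $O(u)$-relative error in the leading term is of order $t^{1/2}\cdot u^{-1} \sim 1$ after multiplication by $t$, which forces $v$ to be of order $t$, giving $c_2 t$, and the remaining error is $O(t^\gamma)$ for some rational $\gamma>1$. The cleanest way to organize this, and to guarantee that the expansion proceeds in powers of $t^{1/2}$ with a genuine error term of the stated shape, is to apply the implicit function theorem (or Newton's method / Puiseux expansion) to the function obtained after clearing denominators: multiplying through by $\prod_j(1-y^{k_j})^2(1-y^{2k_j})$ turns the equation into a polynomial identity $P(y,t)=0$ with $P(1,0)=0$, and one checks the relevant partial derivative is nonzero so that a Puiseux series solution in $t^{1/2}$ exists; matching its first two coefficients against the computation above identifies $c_1$ and some $c_2>0$, and the tail is $O(t^\gamma)$ for the next rational exponent $\gamma$.

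The main obstacle I anticipate is purely bookkeeping rather than conceptual: one must be careful that the subleading contributions from \emph{all} the terms $\beta_j y^{k_j}/(1-y^{k_j})^2$ (not just one) as well as the $\alpha_j$-terms are genuinely of lower order than the leading $u^{-2}$ singularity, so that they do not perturb $c_1$; this is where the hypothesis $k_j\ge 1$ and positivity of $\alpha_j,\beta_j$ are used. The value of $c_2$ and the precise exponent $\gamma$ are not needed elsewhere in the paper (only $c_1$ is used explicitly), so I would not belabor their exact evaluation — it suffices to show they exist with $c_1$ as claimed and $\gamma>1$ rational, which the Puiseux/implicit-function argument delivers automatically.
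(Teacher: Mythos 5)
Your approach is essentially the same as the paper's: both clear denominators to obtain a polynomial identity $P(y,t)=0$ and then run the Newton--Puiseux algorithm, and your informal asymptotics near $y=1$ correctly locate the leading term $u\sim c_1 t^{1/2}$ with $c_1=\sqrt{\sum_j\beta_j/k_j^2}$. One small correction: the implicit function theorem does \emph{not} apply to $P$ at $(1,0)$ --- there $y=1$ is a root of $P(\cdot,0)$ of multiplicity $2(r+1)$, so $\partial P/\partial y(1,0)=0$; the $t^{1/2}$ in the expansion appears precisely \emph{because} of this degeneracy, and what is actually needed is the Newton-polygon step (which you also mention as an alternative, and which is exactly what the paper carries out to find $\gamma_1=1/2$, then $c_1$, then $\gamma_2=1$).
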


The proof of this lemma, which is an application of the Newton-Puiseux method, can be found in Section \ref{sec_proof2}. We also note that this method also allows us to determine $c_2$, it will however not be needed in what follows, it disappears in the proof of the following:
\begin{thm}\label{thm_asymp} Let $(f_n)_{n\in \NN}$ be as above. Then
\[
f_n   \sim  C_1(b,k,r)\cdot n^{-3/4 + \sum_{j=0}^r b_{1,j}/2} \exp\left(C_2(b,k,r)\cdot\sqrt{n}\right) ,
\]
as $n\to\infty$, where
\[
C_1(b,k,r) =  \frac{ \left(\sum_{j=0}^r \frac{b_{2,j}}{k_j}\right)^{1/4} }{\sqrt{4\pi}}\exp\left(\sum_{j=0}^r \frac{b_{2,j}}{2k_j} + \sum_{j=0}^r \frac{k_j-1}{2k_j}\right) \prod_{j=0}^r \left(\frac{1}{2k_j\; \sqrt{\sum_{j=0}^r \frac{b_{2,j}}{k_j}}} \right)^{b_{1,j}} 
\]
and
\[C_2(b,k,r) = 2\sqrt{\sum_{j=0}^r \frac{b_{2,j}}{k_j}}.\]
\end{thm}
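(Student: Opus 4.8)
The plan is to run the classical saddle-radius argument on the uniform estimate of Proposition \ref{prp_coeff}. From the explicit formulas, $A:[0,1)\to[0,\infty)$ is continuous with $A(0)=0$, and since $B(\rho)=\rho\,A'(\rho)$ is manifestly positive on $(0,1)$, $A$ is strictly increasing there; moreover $A(\rho)\to\infty$ as $\rho\to 1$. Hence for all sufficiently large $n$ there is a unique $\rho_n\in(0,1)$ with $A(\rho_n)=n$, and $\rho_n\to 1$. Substituting $\rho=\rho_n$ into Proposition \ref{prp_coeff} kills the Gaussian factor, so
\[ f_n = \frac{F(\rho_n)}{\rho_n^{\,n}\,\sqrt{2\pi\,B(\rho_n)}}\,\bigl(1+\littleo{1}\bigr)\qquad(n\to\infty), \]
and the problem reduces to the asymptotics of $F(\rho_n)$, of $\rho_n^{-n}$, and of $B(\rho_n)$.

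To locate $\rho_n$, divide $A(\rho_n)=n$ by $n$: this is exactly the equation of Lemma \ref{lem_roots} with $t=1/n$, $\alpha_j=2k_j b_{1,j}$ and $\beta_j=k_j b_{2,j}$. Hence
\[ \rho_n = 1 - c_1\,n^{-1/2} + c_2\,n^{-1} + \bigO{n^{-\gamma}},\qquad c_1=\sqrt{\textstyle\sum_{j=0}^r b_{2,j}/k_j}, \]
for some $\gamma\in\QQ_{>1}$ and some explicit (but ultimately irrelevant) $c_2$; already this shows $C_2(b,k,r)=2c_1$. Write $\varepsilon_n=1-\rho_n$. It is worth stressing that the \emph{two-term} accuracy of this expansion is what makes the argument work: $c_2$ enters $\log F(\rho_n)$ through the terms $b_{2,j}/(1-\rho_n^{k_j})$ and it also enters $-n\log\rho_n$, and it is the cancellation between these two occurrences that forces the final multiplicative constant to depend only on $c_1$ and on the data $k_j,b_{1,j},b_{2,j}$ — had we only known $\rho_n=1-c_1n^{-1/2}+\littleo{n^{-1/2}}$ we could not pin down $C_1$.

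Now the bookkeeping. Using $1-\rho_n^{k}=k\varepsilon_n-\binom{k}{2}\varepsilon_n^2+\bigO{\varepsilon_n^3}$ and $\varepsilon_n=c_1n^{-1/2}+\bigO{n^{-1}}$, one expands each ingredient up to an additive $\littleo{1}$ in the logarithm: $-\log(1-\rho_n^{2k_j})=\tfrac12\log n-\log(2k_jc_1)+\littleo{1}$, and $1/(1-\rho_n^{k_j})=n^{1/2}/(k_jc_1)+(\text{explicit constant})+\littleo{1}$, which together give $\log F(\rho_n)$. For the denominator only the term of order $n^{3/2}$ in $B$ survives, namely $\sum_j 2b_{2,j}k_j^2\,\rho_n^{2k_j}/(1-\rho_n^{k_j})^3$, giving $B(\rho_n)=\tfrac{2}{c_1}n^{3/2}(1+\littleo{1})$, hence $\tfrac12\log(2\pi B(\rho_n))=\tfrac34\log n+\tfrac12\log\tfrac{4\pi}{c_1}+\littleo{1}$ (corrections to $B$ of relative size $n^{-1/2}$ disappear inside $\log\sqrt{B}$). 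Finally $-n\log\rho_n=n\varepsilon_n+\tfrac12 n\varepsilon_n^2+\bigO{n\varepsilon_n^3}=c_1\sqrt n-c_2+\tfrac{c_1^2}{2}+\littleo{1}$. Adding $\log F(\rho_n)-n\log\rho_n-\tfrac12\log(2\pi B(\rho_n))$, the $c_2$'s cancel and one is left with $\log f_n = 2c_1\sqrt n + \bigl(\tfrac12\sum_j b_{1,j}-\tfrac34\bigr)\log n + \kappa + \littleo{1}$ for an explicit constant $\kappa$; exponentiating yields the stated asymptotic with $C_2(b,k,r)=2c_1$ and $C_1(b,k,r)=e^{\kappa}$.

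The step I expect to be the main obstacle is not conceptual but purely technical: controlling all the $\littleo{1}$ error terms \emph{uniformly} through these expansions, and carrying the Puiseux expansion of $\rho_n$ to exactly the order at which the $c_2$-cancellation becomes visible — i.e.\ keeping track of which lower-order corrections genuinely vanish in the limit and which (like $c_2$) must be retained until they cancel at the very end.
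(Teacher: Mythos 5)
Your proof is correct and follows essentially the same route as the paper's: apply Proposition~\ref{prp_coeff} at the saddle radius $\rho_n$ determined by $A(\rho_n)=n$, use Lemma~\ref{lem_roots} with $t=1/n$ to pin down $\rho_n$ up to order $n^{-1}$, and then expand $F(\rho_n)$, $\rho_n^{-n}$ and $B(\rho_n)$, observing that the $c_2$-terms cancel (which you correctly trace to $c_1^2=\sum_j b_{2,j}/k_j$, so that $\sum_j b_{2,j}c_2/(k_jc_1^2)=c_2$ cancels against the $-c_2$ coming from $-n\log\rho_n$). You are a bit more careful than the paper about existence and uniqueness of $\rho_n$ (via $B=\rho A'>0$ and $A(0)=0$, $A(1^-)=\infty$), which is a welcome addition; otherwise the argument and the reason it works are identical.
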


We will prove this theorem in Section \ref{sec_proof3}.

\subsection{The number of permutation representations}\label{sec_permreps}

Using Theorem \ref{thm_asymp} we can now determine the asymptotic number of permutation representations of a free product of virtually cyclic right-angled Coxeter groups. Before we state it, we define some constants depending on a given Coxeter group.

\begin{dff}\label{def_csts}
Let $r_1,\ldots,r_m \in \NN$ and
\[\Gamma = \Asterisk_{l=1}^m \cox(\mathcal{A}_{r_l}) .\]
Then we define
\[
A_\Gamma = \prod_{l=1}^m  \frac{ (s_{\mathrm{tot}}(r_l))^{1/4-s_{\mathrm{tot}(r_l)}/4} }{\sqrt{4\pi}\; 2^{\sum_{j=0}^{r_l} (j+1)s_{2^j}/2}} \;
 \exp\left(-\sum_{j=0}^{r_l} 2^j s_{2^j}+\frac{1}{2}s_{\mathrm{tot}}(r_l) + \frac{1}{2}r_l -\frac{1}{2} + \frac{1}{2^{r_l+1}} \right)  ,
\]
\[B_\Gamma =2\sum_{l=0}^m \sqrt{s_{\mathrm{tot}}(r_l)}\]
and
\[C_\Gamma =-3m/4+\sum_{l=0}^m s_{\mathrm{tot}}(r_l)/4.\]
Here $s_{\mathrm{tot}}(r)$ denotes the total number of subgroups of $(\ZZ/2\ZZ)^r$. In other words,
\[s_{\mathrm{tot}}(r) = \sum_{j=0}^r s_{2^j}. \]
\end{dff}

This now allows us to write down the asymptote for $h_n(\Gamma)$:

\begin{thm}\label{thm_permrep} Let $r_1,\ldots,r_m \in \NN$ and
\[\Gamma = \Asterisk_{l=1}^m \cox(\mathcal{A}_{r_l}) .\]
Then 
\[ h_n(\Gamma) \sim A_\Gamma \;n^{C_\Gamma}\; \exp(B_\Gamma\;\sqrt{n}) \; n!^m.\]
as $n\to\infty$.
\end{thm}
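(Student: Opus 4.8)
The plan is to reduce the statement to the single-factor case $m=1$, which is then an instance of Theorem \ref{thm_asymp} applied to the generating function $G_r(x)$ of Theorem \ref{thm_genfn}. First I would record that for a free product $\Gamma = \Asterisk_{l=1}^m \Gamma_l$ one has $h_n(\Gamma) = \sum h_n(\Gamma_1)\cdots$ — more precisely, since a homomorphism $\Gamma \to \sym_n$ is the same as an $m$-tuple of homomorphisms $\Gamma_l \to \sym_n$, we get the clean identity
\[
\frac{h_n(\Gamma)}{n!} = \prod_{l=1}^m \frac{h_n(\Gamma_l)}{n!}, \qquad \text{equivalently} \qquad h_n(\Gamma) = \frac{\prod_{l=1}^m h_n(\Gamma_l)}{(n!)^{m-1}}.
\]
Hence it suffices to prove the case $m=1$, i.e. $h_n(\cox(\mathcal{A}_r)) \sim A_r\, n^{C_r}\exp(B_r\sqrt n)\, n!$, and then multiply the asymptotes together; the product of $m$ copies of $n!^1$ divided by $(n!)^{m-1}$ gives the advertised $n!^m$, the exponents $C_{r_l}$ add up to $C_\Gamma$, the $B_{r_l}$ add up to $B_\Gamma$, and the $A_{r_l}$ multiply to $A_\Gamma$ (this last bookkeeping is exactly the factorization $A_\Gamma = \prod A_{r_l}$ etc. stated after Theorem \ref{thm_subgrps}).

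For the case $m=1$, by definition $h_n(\cox(\mathcal{A}_r))/n!$ is the $n$-th coefficient of $G_r(x)$. Theorem \ref{thm_genfn} gives
\[
G_r(x) = \exp\!\left(-\sum_{j=0}^r 2^j s_{2^j}\right)\prod_{j=0}^r \left(1-x^{2^{j+1}}\right)^{-s_{2^j}/2}\exp\!\left(\frac{2^j s_{2^j}}{1-x^{2^j}}\right).
\]
Up to the overall constant factor $\exp(-\sum_j 2^j s_{2^j})$, this is exactly a function of the form treated in Proposition \ref{prp_coeff} and Theorem \ref{thm_asymp}: set $k_j = 2^j$ (so $k_0=1$ and $k_0<k_j$ for $j\ge 1$, as required), $b_{1,j} = s_{2^j}/2$, and $b_{2,j} = 2^j s_{2^j}$. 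I would then simply substitute these values into the formulas for $C_1(b,k,r)$, $C_2(b,k,r)$ and the exponent $-3/4 + \sum_j b_{1,j}/2$ from Theorem \ref{thm_asymp}, multiply $C_1$ by the constant $\exp(-\sum_j 2^j s_{2^j})$, and check that the result matches $A_r$, $B_r$, $C_r$ from Definition \ref{def_csts}. Concretely: $\sum_j b_{2,j}/k_j = \sum_j s_{2^j} = s_{\mathrm{tot}}(r)$, so $C_2 = 2\sqrt{s_{\mathrm{tot}}(r)} = B_r$; the exponent becomes $-3/4 + \tfrac14\sum_j s_{2^j} = -3/4 + s_{\mathrm{tot}}(r)/4 = C_r$; and $C_1$ times the constant unpacks, using $\sum_j \tfrac{b_{2,j}}{2k_j} = \tfrac12 s_{\mathrm{tot}}(r)$, $\sum_j \tfrac{k_j-1}{2k_j} = \tfrac12\sum_j (1-2^{-j}) = \tfrac{r}{2}+\tfrac12(2^{-r-1}-1)\cdot(\text{sign})$ — I would compute $\sum_{j=0}^r \tfrac{2^j-1}{2^{j+1}} = \tfrac{r+1}{2} - \tfrac12\sum_{j=0}^r 2^{-j} = \tfrac{r+1}{2} - (1-2^{-r-1}) = \tfrac{r-1}{2} + 2^{-r-1}$, matching the $\tfrac12 r - \tfrac12 + \tfrac{1}{2^{r+1}}$ in the exponent of $A_r$ — together with $\prod_j (2k_j \sqrt{s_{\mathrm{tot}}})^{-b_{1,j}} = s_{\mathrm{tot}}^{-\frac14 s_{\mathrm{tot}}}\prod_j(2\cdot 2^j)^{-s_{2^j}/2} = s_{\mathrm{tot}}^{-\frac14 s_{\mathrm{tot}}}\cdot 2^{-\sum_j (j+1)s_{2^j}/2}$, to give exactly the stated $A_r$.

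The one genuine subtlety to address is that Theorem \ref{thm_asymp} is stated for $r \in \NN_{\ge 1}$ (Proposition \ref{prp_coeff} requires at least the indices $0$ and $1$ with $k_0 < k_1$), whereas the case $r=0$ must also be covered. For $r=0$ we have $G_0(x) = \exp(-1)(1-x^2)^{-1/2}\exp(1/(1-x))$, which is a single factor; I would handle this either by citing the classical Chowla–Herstein–Moore asymptote for $h_n(\ZZ/2\ZZ)$ directly (as the excerpt notes is recovered here) and then multiplying by the elementary correction coming from the extra $\ZZ/2\ZZ$ factor in $\cox(\mathcal{A}_0) = \ZZ/2\ZZ * \ZZ/2\ZZ \cong D_\infty$, or — cleaner and self-contained — by observing that the proof of Proposition \ref{prp_coeff} and Theorem \ref{thm_asymp} goes through verbatim when $r=0$, since the only role of the hypothesis $k_0 < k_j$ for $j\ge 1$ is to ensure the dominant singularity behavior is controlled by the $k_0=1$ term, which is automatic when that is the only term. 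The main obstacle is therefore not conceptual but is this edge-case verification plus the somewhat lengthy but entirely mechanical constant-chasing; once Theorem \ref{thm_asymp} is granted, nothing deep remains.
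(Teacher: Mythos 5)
Your overall strategy --- reduce to $m=1$ via the free product identity, apply Theorem \ref{thm_asymp} to $G_r(x)$ with $k_j = 2^j$, $b_{1,j}=s_{2^j}/2$, $b_{2,j}=2^js_{2^j}$, then chase constants --- is exactly the paper's. Your constant computations all check out ($\sum_j b_{2,j}/k_j = s_{\mathrm{tot}}(r)$, $\sum_{j=0}^r (2^j-1)/2^{j+1} = (r-1)/2 + 2^{-r-1}$, the product $\prod_j(2k_j\sqrt{s_{\mathrm{tot}}})^{-b_{1,j}}$, etc.), and your observation about the $r=0$ edge case --- that $k_0<k_j$ is vacuous when the index set is the singleton $\{0\}$, so the proofs of Proposition \ref{prp_coeff} and Theorem \ref{thm_asymp} go through --- is a legitimate point that the paper itself glosses over by deferring to Chowla--Herstein--Moore.

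However, the free-product identity you write down is wrong. From ``a homomorphism $\Gamma\to\sym_n$ is the same as an $m$-tuple of homomorphisms $\Gamma_l\to\sym_n$'' (correct, by the universal property of free products) the consequence is simply
\[
h_n(\Gamma) = \prod_{l=1}^m h_n(\Gamma_l),
\]
not $\tfrac{h_n(\Gamma)}{n!} = \prod_l \tfrac{h_n(\Gamma_l)}{n!}$, which would give $h_n(\Gamma) = (n!)^{-(m-1)}\prod_l h_n(\Gamma_l)$. Exponential generating functions of the $h_n$ do \emph{not} multiply under free products: the bijection is on the sets $\Hom(\,\cdot\,,\sym_n)$ themselves, with no $n!$ normalization involved. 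Your follow-up claim that ``the product of $m$ copies of $n!^1$ divided by $(n!)^{m-1}$ gives the advertised $n!^m$'' is also arithmetically false, since $(n!)^m/(n!)^{m-1} = n!$; as written, your argument would yield $h_n(\Gamma)\sim A_\Gamma n^{C_\Gamma}e^{B_\Gamma\sqrt n}\,n!$, contradicting the theorem. The fix is immediate: with the correct identity, multiplying the single-factor asymptotes $h_n(\cox(\mathcal{A}_{r_l})) \sim A_{r_l}n^{C_{r_l}}e^{B_{r_l}\sqrt{n}}\,n!$ together directly gives $h_n(\Gamma)\sim A_\Gamma n^{C_\Gamma}e^{B_\Gamma\sqrt{n}}\,(n!)^m$ with no spurious quotient. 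The rest of your proposal stands as the paper's proof.
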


\begin{proof} For the case $m=1$ and $r_1=0$, this follows directly from the classical result by Chowla, Herstein and Moore \cite{CHM}. However, our methods also apply.

Since 
\[h_n(\Gamma) = \prod_{l=1}^m h_n(\cox(\mathcal{A}_{r_l})),\]
we can determine the asymptote for each factor independently. In other words, all we need to determine is the asymptote for $h_n(\cox(\mathcal{A}_r))$

In the language of Theorem \ref{thm_asymp} and using Theorem \ref{thm_genfn}, we have
\[b_{1,j} = s_{2^j}/2, \; \; b_{2,j} = 2^js_{s^j}\; \; \text{and} \; \; k_j = 2^j  \]
So, filling in the constants, we get
\begin{multline*}
C_1(b,k,r) =  \frac{ (s_{\mathrm{tot}}(r))^{1/4} }{\sqrt{4\pi}}\exp\left(\frac{1}{2}s_{\mathrm{tot}}(r) + \sum_{j=0}^r \frac{2^j-1}{2^{j+1}} \right) \prod_{j=0}^r \left(\frac{1}{2^{j+1}\; \sqrt{s_{\mathrm{tot}}(r)}} \right)^{s_{2^j}/2} \\
=  \frac{ (s_{\mathrm{tot}}(r))^{1/4} }{\sqrt{4\pi}}\exp\left(\frac{1}{2}s_{\mathrm{tot}}(r) + \frac{1}{2}r -\frac{1}{2} + \frac{1}{2^{r+1}}\right) \prod_{j=0}^r \left(\frac{1}{2^{j+1}\; \sqrt{s_{\mathrm{tot}}(r)}} \right)^{s_{2^j}/2} 
\end{multline*}
and
\[C_2(b,k,r) = 2\sqrt{\sum_{j=0}^r s_{2^j}} = 2\sqrt{s_{\mathrm{tot}}(r)}.\]
So, using Theorem \ref{thm_asymp} and the extra multiplicative factor from Theorem \ref{thm_genfn}, we obtain 
\begin{multline*}
h_n(\cox(\mathcal{A}_r)) \sim  \frac{ (s_{\mathrm{tot}}(r))^{1/4} }{\sqrt{4\pi}}
\cdot \exp\left(-\sum_{j=0}^r 2^j s_{2^j}+\frac{1}{2}s_{\mathrm{tot}}(r) + \frac{1}{2}r -\frac{3}{2} + \frac{1}{2^r} \right)   \\
\cdot\prod_{j=0}^r \left(\frac{1}{2^{j+1}\; \sqrt{s_{\mathrm{tot}}(r)}} \right)^{s_{2^j}/2}  \cdot n^{s_{\mathrm{tot}}(r)/4-3/4} \exp\left(2 \sqrt{s_{\mathrm{tot}}(r)\cdot n} \right),
\end{multline*}
as $n\to \infty$.
\end{proof}

\subsection{The number of subgroups}

Theorem \ref{thm_permrep} also allows us to determine the asymptotic of the number of subgroups of a free product of virtually cyclic Coxeter groups. We start with the following lemma:
\begin{lem}\label{lem_fastgrowth}
  Suppose $\Gamma$ is a group so that
  \[h_n(\Gamma) \sim f(n) (n!)^\alpha\]
  as $n\to \infty$, where $\alpha >1$ and $f(n) \sim An^Ce^{B\sqrt n}$ as $n \to +\infty$. Then
  \[s_n(\Gamma) \sim n\cdot (n!)^{\alpha -1} f(n)\]
  as $n\to \infty$. 
\end{lem}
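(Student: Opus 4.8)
The plan is to relate subgroups of index $n$ to transitive permutation representations on $[n]$, and to show that under the stated growth hypothesis almost all permutation representations are transitive, so the two counts are asymptotically proportional. Recall the standard dictionary: the number $t_n(\Gamma)$ of transitive permutation representations $\Gamma \to \sym_n$ equals $(n-1)!\, s_n(\Gamma)$, since a transitive action on $[n]$ together with a choice of base point is the same as an index-$n$ subgroup together with a coset enumeration, and there are $(n-1)!$ ways to label the cosets once one is pinned to the point $1$. Thus the claim $s_n(\Gamma) \sim n \cdot (n!)^{\alpha-1} f(n)$ is equivalent to $t_n(\Gamma) \sim n! \,(n!)^{\alpha - 1} f(n) = h_n(\Gamma)$; that is, the number of transitive permutation representations is asymptotic to the number of all permutation representations.

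First I would set up the exponential-formula relation between $h_n$ and $t_n$. Writing $H(z) = \sum_{n\ge 0} h_n(\Gamma) z^n/n!$ and $T(z) = \sum_{n\ge 1} t_n(\Gamma) z^n/n!$ we have $H(z) = \exp(T(z))$ (a permutation representation decomposes uniquely into transitive pieces on the orbit partition), which at the level of coefficients gives
\[
h_n(\Gamma) = \sum_{k=1}^{n} \binom{n-1}{k-1} t_k(\Gamma)\, h_{n-k}(\Gamma),
\]
with the $k=n$ term being exactly $t_n(\Gamma)$. So
\[
h_n(\Gamma) - t_n(\Gamma) = \sum_{k=1}^{n-1} \binom{n-1}{k-1} t_k(\Gamma)\, h_{n-k}(\Gamma) \le \sum_{k=1}^{n-1} \binom{n-1}{k-1} h_k(\Gamma)\, h_{n-k}(\Gamma),
\]
using $t_k \le h_k$. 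The goal is then to show this sum is $o(h_n(\Gamma))$. Dividing through by $h_n(\Gamma)$ and using the hypothesis $h_n(\Gamma) \sim f(n)(n!)^\alpha$ with $f(n)\sim A n^C e^{B\sqrt n}$, the $k$-th term is comparable to
\[
\binom{n-1}{k-1} \frac{f(k)\,(k!)^\alpha \, f(n-k)\,((n-k)!)^\alpha}{f(n)\,(n!)^\alpha},
\]
and $\binom{n-1}{k-1}(k!)^\alpha((n-k)!)^\alpha / (n!)^\alpha \le \frac{1}{n}\binom{n}{k}^{1-\alpha} \cdot (\text{bounded factor})$; since $\alpha > 1$ the binomial factor is at most $1$ and in fact decays, while $f(k)f(n-k)/f(n)$ grows only sub-exponentially in a controlled way. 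The main work is a careful split of the sum into the ``edge'' ranges $k$ small (say $k \le n/2$, handled by symmetry it suffices to bound $2\sum_{k\le n/2}$) where $\binom{n}{k}^{1-\alpha}$ is tiny except near $k=1$, and the term $k=1$ (and by symmetry $k = n-1$) which contributes $t_1 h_{n-1} + (n-1) t_{n-1} h_1 \ll h_{n-1} \cdot \mathrm{poly}(n) = o(h_n)$ because $h_{n-1}/h_n \asymp ((n-1)!/n!)^\alpha \cdot (\text{sub-exp}) = n^{-\alpha}(\text{sub-exp}) \to 0$ fast enough to kill any polynomial factor. For the bulk range one shows the summand is bounded by $C^k (n!)^{(1-\alpha)(\text{something positive})}$ and sums a convergent geometric-type series, all of which is $o(1)$.

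The step I expect to be the main obstacle is making the estimate on $\sum_{k=1}^{n-1}\binom{n-1}{k-1}h_k h_{n-k}/h_n$ genuinely uniform and airtight across the whole range of $k$, because the factor $f(k)f(n-k)/f(n)$ involves $e^{B(\sqrt k + \sqrt{n-k} - \sqrt n)}$, and $\sqrt k + \sqrt{n-k} - \sqrt n$ is maximized in the middle of the range (where it is of order $\sqrt n$) rather than at the edges; one must check that the super-polynomial decay coming from $\binom{n}{k}^{1-\alpha}$ (which is of order $\exp(-(\alpha-1) n H(k/n))$ for the binary entropy $H$) dominates this $e^{O(\sqrt n)}$ term for $k$ away from $1$ and $n-1$, and separately handle the boundary terms where entropy decay is weak but the factorial ratio $h_{n-1}/h_n$ is itself super-polynomially small. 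Once the decomposition into ``transitive core plus lower-order stuff'' is in hand and the tail bound is established, translating back via $t_n(\Gamma) = (n-1)! \, s_n(\Gamma)$ and $h_n(\Gamma)\sim f(n)(n!)^\alpha$ immediately yields $s_n(\Gamma) = t_n(\Gamma)/(n-1)! \sim h_n(\Gamma)/(n-1)! \sim n \cdot (n!)^{\alpha-1} f(n)$, completing the proof.
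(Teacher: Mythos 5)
Your proposal is correct and follows essentially the same route as the paper, which simply cites the free-group argument from Lubotzky--Segal: decompose $h_n$ via the exponential formula as $h_n=\sum_k\binom{n-1}{k-1}t_k h_{n-k}$, bound $t_k\le h_k$, and show $\sum_{k=1}^{n-1}\binom{n-1}{k-1}h_kh_{n-k}=o(h_n)$ by splitting into a bulk range (where the super-polynomial decay of $\binom nk^{1-\alpha}$ crushes the $e^{O(\sqrt n)}$ factor from $f(k)f(n-k)/f(n)$) and boundary ranges near $k=1,n-1$. The paper makes this quantitative via the elementary inequality $\binom nk\ge 2^{k-1}n/2$, which is the one concrete detail your sketch leaves implicit.
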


\begin{proof} The proof for the free group in \cite[Section 2.1]{Lubotzky_Segal} applies almost verbatim to our situation: following it we get first that
  \[
  h_n(\Gamma) - t_n(\Gamma) \le (n!)^\alpha \sum_{k=1}^{n-1} \binom n k^{1 - \alpha} f(k)f(n-k)
  \]
  and using their inequality $\binom n k \ge 2^{k-1} n/2$ and the asymptotic equivalent for $f$ we get that
  \[
  \frac{h_n(\Gamma) - t_n(\Gamma)}{(n!)^\alpha} \le \frac {4 + o(1)} n \sum_{k=1}^{n-1} 2^{-k} \left(\frac{k(n-k)} n \right)^C e^{-B(\sqrt n - \sqrt k - \sqrt{n-k})}. 
  \]
  It remains to see that the sum on the right is $o(n)$. This is easily done by separating in $k, n-k \ge n^{1/2+\varepsilon}$ (for some $\varepsilon > 0$ small enough), terms for which the summand is $\le 2^{(1-\delta)n}$ for some $0 < \delta < 1$ (depending on $\varepsilon$) and the remainder $O(n^{1/2+\varepsilon})$ terms which are $O(1)$ individually. 
\end{proof}
\begin{thm}\label{thm_subgrps} Let $m\in \NN_{\geq 2}$, $r_1,\ldots,r_m \in \NN$ and
\[\Gamma = \Asterisk_{l=1}^m \cox(\mathcal{A}_{r_l}) .\]
Then,
\[ s_n(\Gamma) \sim A_\Gamma \;n^{1+C_\Gamma}\; \exp(B_\Gamma\;\sqrt{n}) \; n!^{m-1}.\]
as $n\to\infty$.
\end{thm}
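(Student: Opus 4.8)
The plan is to combine the asymptotic for $h_n(\Gamma)$ from Theorem~\ref{thm_permrep} with the transitivity argument encapsulated in Lemma~\ref{lem_fastgrowth}. The point is that for a non-trivial free product $\Gamma = \Asterisk_{l=1}^m \cox(\mathcal{A}_{r_l})$ with $m \geq 2$, the exponent $\alpha = m$ on $n!$ in $h_n(\Gamma) \sim A_\Gamma\, n^{C_\Gamma} \exp(B_\Gamma\sqrt n)\, n!^m$ is strictly greater than $1$, which is exactly the hypothesis needed to apply Lemma~\ref{lem_fastgrowth}. The standard relationship between $h_n$, the number $t_n(\Gamma)$ of \emph{transitive} permutation representations, and the subgroup count $s_n(\Gamma)$ is $t_n(\Gamma) = (n-1)!\, s_n(\Gamma)$, so once Lemma~\ref{lem_fastgrowth} tells us $h_n(\Gamma) \sim t_n(\Gamma)$ we immediately get $s_n(\Gamma) \sim h_n(\Gamma)/(n-1)! = n\cdot h_n(\Gamma)/n!$.

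Concretely, first I would set $f(n) = A_\Gamma\, n^{C_\Gamma} \exp(B_\Gamma\sqrt n)$ and $\alpha = m$, and verify that $h_n(\Gamma) \sim f(n)\, (n!)^\alpha$ with $f$ of the form $A n^C e^{B\sqrt n}$ required by Lemma~\ref{lem_fastgrowth}; this is precisely the content of Theorem~\ref{thm_permrep}, and $\alpha = m \geq 2 > 1$ since we are in the case $m \geq 2$. Then Lemma~\ref{lem_fastgrowth} yields $s_n(\Gamma) \sim n\cdot (n!)^{m-1}\, f(n) = n\cdot (n!)^{m-1}\, A_\Gamma\, n^{C_\Gamma}\exp(B_\Gamma\sqrt n)$. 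Collecting the powers of $n$ gives $s_n(\Gamma) \sim A_\Gamma\, n^{1+C_\Gamma}\exp(B_\Gamma\sqrt n)\, n!^{m-1}$, which is the claimed asymptote. The only small bookkeeping point is to recall why $s_n(\Gamma) \sim n\cdot (n!)^{\alpha-1} f(n)$ follows from $h_n \sim t_n$: the map sending a transitive action on $[n]$ to the point stabiliser of $1$ is $(n-1)!$-to-one onto index-$n$ subgroups, giving $t_n(\Gamma) = (n-1)!\, s_n(\Gamma)$, and Lemma~\ref{lem_fastgrowth}'s proof shows $h_n(\Gamma) - t_n(\Gamma) = o(n!^\alpha / n) \cdot (\text{lower order})$, in fact $h_n(\Gamma) - t_n(\Gamma) = o(n^{-1} h_n(\Gamma))$ so that $t_n(\Gamma) \sim h_n(\Gamma)$.

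There is essentially no obstacle here: the theorem is a direct corollary of the two preceding results, and the work — the admissibility of $G_r(x)$, the Hayman-style coefficient asymptotics, and the combinatorial estimate separating transitive from intransitive representations — has already been done in Theorem~\ref{thm_asymp}, Theorem~\ref{thm_permrep} and Lemma~\ref{lem_fastgrowth} respectively. If I had to name a main point of care, it would be making sure the constants $A_\Gamma$, $B_\Gamma$, $C_\Gamma$ from Definition~\ref{def_csts} are threaded through Lemma~\ref{lem_fastgrowth} without accidentally shifting an exponent: the lemma multiplies by $n$ and divides the factorial power by one, so $C_\Gamma \mapsto 1 + C_\Gamma$ and $m \mapsto m-1$ while $A_\Gamma$ and $B_\Gamma$ are unchanged, and one should double-check that $f(n) = A_\Gamma n^{C_\Gamma} e^{B_\Gamma \sqrt n}$ is exactly the prefactor appearing in Theorem~\ref{thm_permrep}. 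Hence the proof reduces to: apply Theorem~\ref{thm_permrep}, observe $m \geq 2$, and invoke Lemma~\ref{lem_fastgrowth}.

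\begin{proof}
By Theorem~\ref{thm_permrep} we have $h_n(\Gamma) \sim f(n)\,(n!)^m$ as $n \to \infty$, where $f(n) = A_\Gamma\, n^{C_\Gamma}\exp(B_\Gamma\sqrt n)$ is of the form $A n^C e^{B\sqrt n}$. Since $m \geq 2$, we have $\alpha := m > 1$, so Lemma~\ref{lem_fastgrowth} applies and yields
\[
s_n(\Gamma) \sim n\cdot (n!)^{m-1}\, f(n) = A_\Gamma\, n^{1+C_\Gamma}\exp(B_\Gamma\sqrt n)\, n!^{m-1}
\]
as $n \to \infty$, which is the claim.
\end{proof}
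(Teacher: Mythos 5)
Your proof is correct and is exactly the paper's argument: Theorem~\ref{thm_permrep} gives $h_n(\Gamma) \sim A_\Gamma n^{C_\Gamma} e^{B_\Gamma\sqrt n}\,(n!)^m$ with $m\geq 2$, so Lemma~\ref{lem_fastgrowth} applies with $\alpha = m > 1$ and immediately yields the claimed asymptote for $s_n(\Gamma)$. The bookkeeping you flag (shifting $C_\Gamma$ to $1+C_\Gamma$ and $m$ to $m-1$) is handled correctly.
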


\begin{proof} This follows directly from Theorem \ref{thm_permrep} and Lemma \ref{lem_fastgrowth}.
\end{proof}

\subsection{Proof of Proposition \ref{prp_coeff}}\label{sec_proof1}

\begin{prprep}{\ref{prp_coeff}} For $r\in\NN_{\geq 1}$, let $b_{1,j},b_{2,j} >0$ and $k_j\in\NN$ for $j=0,\ldots ,r$, so that $k_0=1$ and 
\[k_0 < k_j \;\;\text{for all}\; j\in \{1,2,3,\ldots,r\}.\] 
Moreover, let
\[F : \DD_1 \to \CC\]
be defined by
\[F(z) = \sum_{n\geq 0} f_n \; z^n = \prod_{j=0}^r \left(1-z^{2k_j}\right)^{-b_{1,j}}\exp\left( \frac{b_{2,j}}{1-z^{k_j}} \right). \]
Then
\[f_n \rho^n =\frac{F(\rho)}{2\sqrt{\pi B(\rho)}}\exp\left(-\frac{1}{4}\frac{\left(A(\rho)-n\right)^2}{B(\rho)}\right)  \; \big(1+o(1)\big) \]
as $\rho\to 1$ uniformly for all natural numbers $n\in\NN$, where
\[
A(\rho) = \sum_{j=0}^r b_{1,j}\;2k_j \;\frac{\rho^{2k_j} }{1-\rho^{2k_j} } + b_{2,j} \;k_j\; \frac{\rho^{k_j}}{(1-\rho^{k_j})^2}\]
and
\[B(\rho) =  \sum_{j=0}^r 4\; b_{1,j}\; k_j^2\; \left(\frac{\rho^{2k_j} }{1-\rho^{2k_j} } + \left(\frac{\rho^{2k_j} }{1-\rho^{2k_j}}\right)^2 \right)    +\; b_{2,j} \;k_j^2\; \left( \frac{\rho^{k_j}}{(1-\rho^{k_j} )^2}  + 2\;\frac{\rho^{2k_j} }{(1- \rho^{k_j})^3}\right) .
\]
\end{prprep} 

\begin{proof} Our goal is of course to prove that $F$ is H-admissible. 

By assumption, all the singularities of $F$ lie on the unit circle $\abs{z} = 1$. 

It follows from the conditions (H1-3) that, for $\rho$ close enough to $1$, the maximum of an H-admissible function on the circle of radius $\rho$ is realized at $z=\rho$.  The assumption that $1=k_0 < k_j$ for all $j>0$ implies that this is indeed the case. 

Writing
\[P(z) = \sum_{j=0}^r -b_{1,j} \log(1-z^{2k_j}) +  \frac{b_{2,j}}{1-z^{k_j}}\]
we obtain $F(z) = \exp (P(z))$. As such, our first goal is to understand the behavior of
\[ P(\rho e^{i\theta}) - P(\rho)\]
as a function of $\theta$ near $\theta = 0$.

Let us consider $P$ term by term. We have
\[ \frac{1-\rho^{2k_j} e^{i \theta 2k_j}}{1-\rho^{2k_j}} = 1 + \frac{\rho^{2k_j}}{1-\rho^{2k_j}}(1 - e^{i \theta 2k_j}).  \]
Using a Taylor expansion, we obtain
\[1- e^{i \theta 2k_j} = -i\theta  2k_j + 2 \theta^2 k_j^2 + \bigO{\theta^3}\]
and hence
\[ \frac{1-\rho^{2k_j} e^{i \theta 2k_j}}{1-\rho^{2k_j} } = 1 - \frac{\rho^{2k_j}}{1-\rho^{2k_j} }\left(i \theta 2k_j - 2 \theta^2 k_j^2 + \bigO{\theta^3}\right).  \]
Again using a Taylor expansion, we have
\[ \log(1-z) = -z + -\frac{1}{2}z^2 + \bigO{z^3} \]
as $z\to 0$. So
\begin{eqnarray*}
\log\left(\frac{1-\rho^{2k_j} e^{i \theta 2k_j}}{1-\rho^{2k_j} }\right) & = &  -\frac{\rho^{2k_j} }{1-\rho^{2k_j}} \left(i\theta \; 2k_j - 2 \theta^2 k_j^2 \right) \\
& & + 2k_j^2 \left(\frac{\rho^{2k_j}}{1-\rho^{2k_j} }\right)^2 \theta^2  + \bigO{\left(\frac{\theta}{1-\rho^{2k_j}}\right)^3}
\\
& = & - i\;2k_j \;\frac{\rho^{2k_j} }{1-\rho^{2k_j} } \; \theta   + 2k_j^2 \left(\frac{\rho^{2k_j}}{1-\rho^{2k_j} } + \left(\frac{\rho^{2k_j} }{1-\rho^{2k_j}}\right)^2 \right)\;  \theta^2\\
& & + \bigO{\left(\frac{\theta}{1-\rho^{2k_j} }\right)^3}
\end{eqnarray*}
For the second type of terms in $P(z)$ we have
\[ \frac{1}{1-\rho^{k_j} e^{i\theta k_j}} - \frac{1}{1-\rho^{k_j} } =  \frac{1}{1-\rho^{k_j} }  \frac{ \rho^{k_j}  \left(e^{i\theta k_j}-1\right)}{1- \rho^{k_j} e^{i\theta k_j}}.\]
We have
\begin{eqnarray*}
\frac{1}{1-\rho^{k_j} e^{i\theta k_j}} & = & \frac{1}{1-\rho^{k_j} - \rho^{k_j}  (e^{i \theta k_j}-1)} \\
 & = & \frac{1}{1-\rho^{k_j} }  \frac{1}{1 - \frac{ \rho^{k_j}  (e^{i\theta k_j}-1)}{1- \rho^{k_j} }} .
\end{eqnarray*}
With yet another Taylor expansion, we get
\begin{eqnarray*}
\frac{1}{1-\rho^{k_j} e^{i\theta k_j}} & = &  \frac{1}{1-\rho^{k_j} } \left(1 + \frac{\rho^{k_j}  (e^{i \theta k_j}-1)}{1-\rho^{k_j} } + \left(\frac{\rho^{k_j}  (e^{i\theta k_j}-1)}{1-\rho^{k_j} }\right)^2 \right. \\
& & \left.+ \bigO{\left(\frac{e^{i\theta k_j}-1}{1- \rho^{k_j} }\right)^3} \right) .
\end{eqnarray*}
Using the expansion for $e^{i\theta k_j} - 1$ again, we obtain
\begin{eqnarray*}
\frac{1}{1- \rho^{k_j} e^{i\theta k_j}} & = &  \frac{1}{1- \rho^{k_j} } \left(1 + \frac{ \rho^{k_j} }{1- \rho^{k_j} } ( i \theta k_j - \frac{1}{2}\theta^2 k_j^2 ) \right. \\
& & \left.  - \frac{1}{2} \left(\frac{\rho^{k_j} }{1-\; \rho^{k_j} }\right)^2 k_j^2 \theta^2  + \bigO{\left(\frac{\theta }{1-\rho^{k_j} }\right)^3} \right) .
\end{eqnarray*}
Putting everything together yields
\begin{eqnarray*}
\frac{1}{1-\rho^{k_j} e^{i\theta k_j}} - \frac{1}{1-\rho^{k_j} } & = &  \frac{\rho^{k_j} \left(e^{i\theta k_j}-1\right)}{(1-\rho^{k_j}  )^2}  \left(1  + \frac{\rho^{k_j}}{1-\rho^{k_j} } ( i \theta k_j - \frac{1}{2}\theta^2 k_j^2 ) \right. \\
& & \left.   - \frac{1}{2} \left(\frac{\rho^{k_j} }{1- \rho^{k_j} }\right)^2 k_j^2 \theta^2 \right.  \left.  + \bigO{\left(\frac{\theta }{1-\rho^{k_j} }\right)^3} \right) \\
& = & \frac{\rho^{k_j} }{(1-\rho^{k_j} )^2}  (i \theta k_j - \frac{1}{2}\theta^2 k_j^2) -  \frac{\rho^{2k_j}}{(1-\rho^{k_j} )^3} \theta^2 k_j^2    \\
& &   + \bigO{\left(\frac{\theta}{1-\rho^{k_j} }\right)^3} \\ 
& = & i\frac{\rho^{k_j} }{(1-\rho^{k_j} )^2}   \theta k_j    -  \left(\frac{1}{2} \frac{\rho^{k_j} }{(1- \rho^{k_j})^2}  + \frac{\rho^{2k_j}}{(1-\rho^{k_j} )^3}\right) \theta^2 k_j^2 \\
& &  + \bigO{\left(\frac{\theta }{1-\rho^{k_j} }\right)^3}.
\end{eqnarray*}
As such, we have
\[
 P(\rho e^{i\theta}) - P(\rho)   =  i\; A(\rho)\; \theta  - \frac{1}{2} \; B(\rho)\; \theta^2  + \bigO{\left(\frac{\theta }{1-\rho^2}\right)^3} 
\]
where 
\[
A(\rho) = 
\sum_{j=0}^r b_{1,j} \;2k_j \;\frac{\rho^{2k_j} }{1-\rho^{2k_j} } + b_{2,j} \;k_j\; \frac{\rho^{k_j}}{(1- \rho^{k_j} )^2}
\]
and
\[
B(\rho) = 
\sum_{j=0}^r 4\; b_{1,j}\; k_j^2 \left(\frac{\rho^{2k_j} }{1-\rho^{2k_j} }  + \left(\frac{\rho^{2k_j} }{1-\rho^{2k_j}}\right)^2 \right)+ \; b_{2,j} \;k_j^2\; \left(\frac{\rho^{k_j} }{(1- \rho^{k_j} )^2}  + 2\;\frac{\rho^{2k_j} }{(1- \rho^{k_j})^3}\right) .
\]
Note that $A(\rho)$ and $B(\rho)$ are exactly of the form of \eqref{eq_functionsH}. This is of course no coincidence.

We now set
\[\delta(\rho) = (1-\rho)^{3/2} \; \left(\log\left(\frac{1}{1- \rho}\right)\right)^{3/4}.
\]
Note that this function is small enough for (H1) to hold. (H3) also readily follows from the form of $B(\rho)$.

All that remains is to check (H2). Indeed, we need to check that
\[\frac{\abs{F(\rho e^{i\theta})} \sqrt{B(\rho)}}{F(\rho)} \to 0 \]
uniformly in $\delta\leq \abs{\theta} \leq \pi$, we claim that this is guaranteed from our choice of $\delta(r)$. 

To this end, set $K=\mathrm{lcm}\st{k_j}{j=0,\ldots,r}$ and
\[\theta_j = j \cdot 2\pi/K\]
for $j=0,\ldots, K$. Once we prove that 
\[\frac{\abs{F(\rho e^{i\delta(\rho)})} \sqrt{B(\rho)}}{F(\rho)} \to 0 \]
and
\[\frac{\abs{F(\rho e^{i \theta_j} )} \sqrt{B(\rho)}}{F(\rho)} \to 0 \]
for all $0 < j <K$, we are done, since $F(z)$ has no other singularities on the circle $\abs{z}=1$.

For the first of these, we can use our approximation of $P(\rho e^{i\theta})$ for $\theta$ close to $0$. Indeed, since $\delta(\rho) / (1- \rho^2) \to 0$ as $\rho\to 1$ and $i\; A(\rho)\delta(\rho)$ is purely imaginary, we have that 
\[
\frac{\abs{F(\rho e^{i \delta(\rho)})} \sqrt{B(\rho)}}{F(\rho)}  \leq  \exp(-B(\rho)\delta(\rho)^2 + \frac{1}{2}\log(B(\rho))+C)\]
for some $C>0$ independent of $\rho$. It follows from the form of $\delta(\rho)$
that there exist constants $C' ,C''>0$ so that
\[-B(\rho)\delta(\rho)^2 + \frac{1}{2}\log(B(\rho)) \leq -C'\;\log\left(\frac{1}{1- \rho}\right)^{3/2}+C''\log\left(\frac{1}{1- \rho}\right) \to -\infty\]
as $\rho \to 1$.

For the second type of terms we have
\[\frac{\abs{F(\rho e^{i \theta_j} )} \sqrt{B(\rho)}}{F(\rho)} = \abs{\exp\left(\sum_{\substack{0\leq i \leq r \\ j\cdot k_i \nmid K}} \frac{b_{2,i}}{1-\rho^{k_i} e^{j\cdot k_i \cdot 2\pi/K}} - \frac{b_{2,i}}{1-\rho^{k_i}} + b_{1,i} \log\left(\frac{1-\rho^{k_i}}{1-\rho^{k_i} e^{j\cdot k_i \cdot 2\pi/K}} \right) \right)}. \]
Whenever $K$ does not divide $j\cdot k_i$, $1/(1-\rho^{k_i} e^{j\cdot k_i \cdot 2\pi/K})$ and $\log(1-\rho^{k_i} e^{j\cdot k_i \cdot 2\pi/K})$ are bounded as functions of $\rho$. So there exists a constant $C>0$ such that:
\[\frac{\abs{F(\rho e^{i \theta_j} )} \sqrt{B(\rho)}}{F(\rho)} \leq \abs{\exp\left(\sum_{\substack{0\leq i \leq r \\ j\cdot k_i \nmid K}} - \frac{b_{2,i}}{1-\rho^{k_i}} + b_{1,i} \log\left(1-\rho^{k_i} \right) + C \right)} \to 0, \]
as $\rho\to 1$, which proves that $F$ is indeed H-admissible.
\end{proof}

\subsection{Proof of Lemma \ref{lem_roots}}\label{sec_proof2}

\begin{lemrep}{\ref{lem_roots}}
Let $r\in\NN$, $\alpha_j, \beta_j \in (0,\infty)$ and $k_j\in \NN_{\geq 1}$, for $j=0,\ldots,r$. Consider for $t\in (0,1)$ the equation in $y$ given by
\[t\;\sum_{j=0}^r \alpha_j \; \frac{y^{2k_j}}{1-y^{2k_j}} + \beta_j\; \frac{y^{k_j}}{(1-y^{k_j})^2} = 1.\]
This equation has a solution $y=y(t)$ that satisfies
\[y (t) = 1 - c_1 t^{1/2} + c_2 t +\bigO{t^\gamma} \]
as $t\to 0$ for some $\gamma \in \QQ_{>1}$, where the constant $c_1>0$ is given by
\[c_1 = \sqrt{\sum_{j=0}^r\frac{\beta_j}{k_j^2}}.\]
\end{lemrep}

\begin{proof} We will turn the equation into a polynomial equation (depending on the parameter $t$) and then develop the Puiseux series for $y(t)$. Recall that Puiseux's theorem tells us that we can find an $m\in \NN$ and $k_0\in\ZZ$ so that
\[y(t) = \sum_{k\geq k_0} c_k\; t^{k/m}.\]
We will apply Newton's method to find the first three coefficients near $1$. 

This method can be given a nice geometric description. Namely, given a polynomial equation of the form
\begin{equation}\label{eq_genpuis}
F(t,y) = \sum_{k=0}^r A_k(t)\;y^k = 0,
\end{equation}
the first power $\gamma_0$ of $t$ in $y$ can be found by considering the Newton polytope $\mathcal{P}\subset \mathbb{R}^2$ of this polynomial. This polytope is the convex hull of all points $(\mathrm{ord}(A_k(t)),k)$, where $\mathrm{ord}(A_k(t))$ denotes the lowest power of $t$ that appears in $A_k(t)$. Now consider all the sides of $\mathcal{P}$. If the line $L$ spanned by a side of $\mathcal{P}$ supports $\mathcal{P}$ and $\mathcal{P}$ lies above $L$, then $-\gamma_1$, where $\gamma_1$ is the slope of $L$, is the first power of a branch of solutions to the equation.

Writing $y=c_1\; t^{\gamma_1}+y_1$, we obtain a new polynomial equation
\[F(t,c_1\; t^{\gamma_1}+y_1)=0\]
in $y_1$ and the process can be iterated, with the only condition that at each step, only those lines are considered that have a larger negative slope than the slopes that have already appeared. Since the powers of $t$ increase in each iteration, this allows us to compute $y(t)$ up to any order. For more information on these methods see \cite{Basu_Pollack_Roy_book}.

Now we return to our equation, which is equivalent to
\[t \sum_{j=0}^r\Big(\alpha_j (1-y^{k_j}) y^{2k_j} + \beta_j (1+y^{k_j})y^{k_j} \Big)\prod_{l\neq j}(1-y^{k_l})^2(1+y^{k_l}) = \prod_{j=0}^r (1-y^{k_j})^2 (1+y^{k_j}).\]
Writing $y=1+y_1$, we obtain
\begin{multline*}
t \sum_{j=0}^r \Bigg\{\Big(\alpha_j (1-(1+y_1)^{k_j}) (1+y_1)^{2k_j} + \beta_j (1+(1+y_1)^{k_j})(1+y_1)^{k_j} \Big) \\
 \cdot \prod_{l\neq j}(1-(1+y_1)^{k_l})^2(1+(1+y_1)^{k_l}) \Bigg\} = \prod_{j=0}^r (1-(1+y_1)^{k_j})^2 (1+(1+y_1)^{k_j}).
\end{multline*}
The lowest power of $y_1$ on the right hand side of the equation is $y_1^{2r}$. On the left hand side, this is $y_1^{2r+2}$. This means that $(2r,1)$, $(2r+1,1)$ and $(2r+2,0)$ are the first vertices of the Newton polytope and hence that $\gamma_1 = 1/2$. So $y=1+c_1 t^{1/2} + \bigO{t^{\gamma'}}$ for some $\gamma'> \frac{1}{2}$. 
To obtain the coefficient $c_1$, we now solve equate the terms that lie on the line of slope $-\gamma_1$ supporting the Newton polygon. In the notation of \eqref{eq_genpuis}, we need to solve
\[ \sum_{\mathrm{ord}(A_k) + \gamma \cdot k = r + 1} a_k c_1^k =0,\]
Here the $r+1$ is the lowest power of $t$ that appears and $a_k$ is so that 
\[A_k(t) = a_k\;t^{\mathrm{ord}(A_k)} + \text{higher order terms}.\]
So, for our polynomial, we get the equation
\[
 \sum_{j=0}^r \beta_j \cdot 2 \prod_{l\neq j} 2\;k_l^2 c_1^2 = \sum_{j=0}^r \frac{\beta_j}{c_1^2 k_j^2}  \prod_{l=0}^r 2\;k_l^2 c_1^2   = \prod_{j=0}^r 2\;k_j^2 c_1^2 
\]
and hence
\[c_1^2 = \sum_{j=0}^r \frac{\beta_j}{k_j^2}.\]
We choose the branch corresponding to the negative solution of this equation.

Writing  $y=1-c_1\;t^{1/2}+y_2$ for the second iteration, we obtain
\begin{multline*}
t \sum_{j=0}^r \Bigg\{\bigg(\alpha_j (1-(1-c_1\;t^{1/2}+y_2)^{k_j}) (1-c_1\;t^{1/2}+y_2)^{2k_j} \\
+ \beta_j (1+(1-c_1\;t^{1/2}+y_2)^{k_j})(1-c_1\;t^{1/2}+y_2)^{k_j} \bigg) \\
\cdot \prod_{l\neq j}(1-(1-c_1\;t^{1/2}+y_2)^{k_l})^2(1+(1-c_1\;t^{1/2}+y_2)^{k_l}) \Bigg\} \\
 = \prod_{j=0}^r (1-(1-c_1\;t^{1/2}+y_2)^{k_j})^2 (1+(1-c_1\;t^{1/2}+y_2)^{k_j}).
\end{multline*}

Since the power $t^{r+1}$ in the $y_2^0$ coefficient disappears, the lowest power in the constant coefficient is $t^{r+3/2}$. For $0<m \leq 2r+2$, the lowest power of $t$ in the $y_2^m$-term is $t^{r+1-m/2}$. So the Newton polytope contains the vertices $(0,r+3/2)$, $(m,r+1-m/2)$ for $m=1,\ldots, 2r+2$. This implies that $\gamma_2 = 1$.

So we may write $y_2 = c_2 t + \bigO{t^\gamma}$ for some $\gamma>1$. We could determine the constant $c_2$ in a similar fashion to how we determined $c_1$. It however turns out that in our application, we will not need the value of the constant.
\end{proof}

\subsection{Proof of Theorem \ref{thm_asymp}}\label{sec_proof3}

\begin{thmrep}{\ref{thm_asymp}} Let $(f_n)_{n\in \NN}$ be as above. Then
\[
f_n   \sim  C_1(b,k,r)\cdot n^{-3/4 + \sum_{j=0}^r b_{1,j}/2} \exp\left(C_2(b,k,r)\cdot\sqrt{n}\right) ,
\]
as $n\to\infty$, where
\[
C_1(b,k,r) =  \frac{ \left(\sum_{j=0}^r \frac{b_{2,j}}{k_j}\right)^{1/4} }{\sqrt{4\pi}}\exp\left(\sum_{j=0}^r \frac{b_{2,j}}{2k_j} + \sum_{j=0}^r \frac{k_j-1}{2k_j}\right) \prod_{j=0}^r \left(\frac{1}{2k_j\; \sqrt{\sum_{j=0}^r \frac{b_{2,j}}{k_j}}} \right)^{b_{1,j}} 
\]
and
\[C_2(b,k,r) = 2\sqrt{\sum_{j=0}^r \frac{b_{2,j}}{k_j}}.\]
\end{thmrep}

\begin{proof} The standard trick is to find a sequence $(\rho_n)_n$ so that
\[A(\rho_n) = n. \]
We claim that there exists a choice of $(\rho_n)_n$ so that $\rho_n\in (0,1)$ for $n$ large enough and
\[ \rho_n \to 1,\]
as $n\to \infty$. As such, we can then apply Proposition \ref{prp_coeff} and we obtain that
\[f_n \sim \frac{F(\rho_n)}{\sqrt{2\pi B(\rho_n)}\; \rho_n^n} \]
as $n\to \infty$.

Let us start by finding $\rho_n$. Using Proposition \ref{prp_coeff}, we obtain
\[
\sum_{j=0}^r b_{1,j}\;2k_j \;\frac{\rho_n^{2k_j} }{1-\rho_n^{2k_j} } + b_{2,j} \;k_j\; \frac{\rho_n^{k_j}}{(1-\rho_n^{k_j} )^2} = n\]
Write $t=1/n$.

So we obtain from Lemma \ref{lem_roots} that
\[\rho_n = 1 - \frac{c_1}{\sqrt{n}} + \frac{c_2}{n} + \bigO{n^{\gamma}}, \]
as $n\to \infty$ for some $\gamma>1$, where $c_1$ and $c_2$ are the constants in that lemma, with $\alpha_j =  b_{1,j}\;2k_j$ and $\beta_j = b_{2,j}\; k_j$. So, in particular
\[c_1 = \sqrt{\sum_{j=0}^r \frac{b_{2,j}}{k_j}}.\]
So we obtain
\[\rho_n^n \sim \left( 1 - \frac{c_1}{\sqrt{n}} + \frac{c_2}{n} \right)^n \sim \exp\left(-\frac{c_1^2}{2}+ c_2 - c_1 \sqrt{n}\right),\]
as $n\to\infty$. Because $\rho_n\to 1$ as $n\to \infty$,
\[
B(\rho_n) \sim  \sum_{j=0}^r 2\; b_{2,j}\; k_j^2 \frac{1}{(1-\rho_n^{k_j})^3} 
  \sim   \frac{n^{3/2}}{c_1^3}  \sum_{j=0}^r \frac{2\; b_{2,j}}{k_j} 
\]
as $n\to \infty$. Likewise,
\begin{eqnarray*}
F(\rho_n) & = & \prod_{j=0}^r \left(1 - \rho_n^{2k_j}\right)^{-b_{1,j}} \exp\left(\frac{b_{2,j}}{1- \rho_n^{k_j}} \right)\\
 & \sim &  \prod_{j=0}^r \left(\frac{n^{1/2}}{2k_j\; c_1}\right)^{b_{1,j}} \exp\left(\frac{b_{2,j}}{k_j\; c_1\;n^{-1/2} - k_j\; c_2\;n^{-1} - \binom{k_j}{2}\;c_1^2\; n^{-1}} \right)\\
 & \sim &  \prod_{j=0}^r \left(\frac{n^{1/2}}{2k_j\; c_1}\right)^{b_{1,j}} \exp\left(\frac{b_{2,j}}{k_j\; c_1}\;n^{1/2} + \frac{b_{2,j}\; c_2}{k_j\; c_1^2}  + \frac{k_j-1}{2k_j} \right)
\end{eqnarray*}
as $n\to\infty$.
Putting all of the above together, we obtain
\begin{multline*}
f_n   \sim  \left(\frac{4\pi}{c_1^3}  \sum_{j=0}^r  \frac{b_{2,j}}{k_j}\right)^{-1/2} \exp\left(\frac{c_1^2}{2}- c_2 + \sum_{j=0}^r\frac{b_{2,j}\;c_2}{k_j\;c_1^2} + \frac{k_j-1}{2k_j} \right) \prod_{j=0}^r \left(\frac{1}{2k_j\; c_1} \right)^{b_{1,j}} \\
\cdot n^{-3/4 + \sum_{j=0}^r b_{1,j}/2}\exp\left(\sqrt{n}\left(c_1 + \frac{1}{c_1} \sum_{j=0}^r \frac{b_{2,j}}{k_j}\right)\right) \\
\end{multline*}
Now we see why we don't need the value of $c_2$: it cancels, and we obtain that
\begin{multline*}
f_n   \sim \frac{1}{\sqrt{4\pi}} \left(\sum_{j=0}^r \frac{b_{2,j}}{k_j}\right)^{1/4} \exp\left(\sum_{j=0}^r \frac{b_{2,j}}{2k_j} + \sum_{j=0}^r \frac{k_j-1}{2k_j}\right) \prod_{j=0}^r \left(\frac{1}{2k_j\; c_1} \right)^{b_{1,j}} \\
\cdot  n^{-3/4 + \sum_{j=0}^r b_{1,j}/2}\exp\left(2 \sqrt{\sum_{j=0}^r \frac{b_{2,j}}{k_j}}\;\sqrt{n}\right)
\end{multline*}
as $n\to \infty$, which proves our claim.
\end{proof}

\bibliography{bib}{}
\bibliographystyle{amsplain}
\end{document}